\newcommand{\R}{{\mathbb R}}
\newcommand{\E}{{\mathcal E}}
\newcommand {\Div} {\mbox{\rm div}} %divergenza
\numberwithin{equation}{section}
\newtheorem{theorem}{Theorem}[section]
\newtheorem{proposition}[theorem]{Proposition}
\newtheorem{lemma}[theorem]{Lemma}
\newtheorem{remark}[theorem]{Remark}
\newtheorem{definition}[theorem]{Definition}
\theoremstyle{definition}
\newcommand{\bgset}[1]{\big\{#1\big\}}
\newcommand{\norm}[2][]{\left\|#2\right\|_{#1}}
\DeclareMathOperator{\dvg}{div}
\title[Stability of eigenvalues for variable exponent problems]{Stability of eigenvalues for \\ variable exponent problems}
\author{Francesca Colasuonno}
\author{Marco Squassina}
\address{Dipartimento di Informatica \newline
Universit\`a degli Studi di Verona
\newline\indent
C\'a Vignal 2, Strada Le Grazie 15,
I-37134 Verona, Italy}
\email{marco.squassina@univr.it}
\address{Istituto per le Applicazioni del Calcolo ``M. Picone''\newline
Consiglio Nazionale delle Ricerche\newline\indent
Via dei Taurini 19,
00185 - Roma, Italy}
\email{fracolasuonno@gmail.com}
\thanks{The present research  was partially supported by
 Gruppo Nazionale per l'Analisi Matematica,
 la Probabilit\`a e le loro Applicazioni (INdAM)}
\begin{document}
	
%\date{\today}

\subjclass[2010]{35J92, 35P30, 34L16}

\keywords{Quasilinear eigenvalue problems, variable exponents, $\Gamma$-convergence.}

\begin{abstract}
In the framework of variable exponent Sobolev spaces,
we prove that the variational eigenvalues defined by inf sup procedures of Rayleigh ratios for
the Luxemburg norms are all stable under uniform convergence of the exponents.
\end{abstract}

\maketitle

\section{Introduction and main result}

\noindent
The differential equations and variational problems involving $p(x)$-growth conditions arise
from nonlinear elasticity theory and electrorheological fluids, and have been the target of various investigations, especially in regularity theory and in nonlocal problems 
(see e.g.\ \cite{acerbming,acerbming2,acp,cp,ruz,Base} and the references therein).
Let $\Omega\subset\R^N$, with $N\ge2$, be a bounded domain with Lipschitz boundary and let $p:\bar\Omega\to\R^+$ be a continuous function such that
\begin{equation}\label{p-+}
1<p_-:=\inf_{\Omega}p \leq p(x) \leq \sup_{\Omega}p =: p_+ < N\,\,\quad\mbox{for all $x\in\Omega$}.
\end{equation} 
We also assume that $p$ is log-H\"older continuous, namely
\begin{equation}\label{logholder}|p(x)-p(y)|\le -\frac{L}{\log|x-y|}\end{equation}
for some $L>0$ and for all $x,y\in\Omega$, with $0<|x-y|\le1/2$. 
From now, we denote by 
$$
\mathscr C:=\big\{p\in C(\bar\Omega) \,:\, p\mbox{ satisfies \eqref{p-+} and \eqref{logholder}}\big\}
$$
the set of admissible variable exponents.
The goal of this paper is to study the stability of the (variational) eigenvalues 
with respect to (uniform) variations of $p$ for the problem 
\begin{equation}
\label{ep}
-\mathrm{div}\Big(p(x)\left|\frac{\nabla u}{K(u)}\right|^{p(x)-2}\frac{\nabla u}{K(u)}\Big)=\lambda S(u) p(x)\left|\frac u{k(u)}\right|^{p(x)-2}\frac u{k(u)},\mbox\quad u\in W^{1,p(x)}_0(\Omega),
\end{equation}
where we have set 
$$
K(u):=\|\nabla u\|_{p(x)},\quad k(u):=\|u\|_{p(x)},\quad 
S(u):=\frac{\displaystyle\int_\Omega p(x)\left|\dfrac{\nabla u}{K(u)}\right|^{p(x)}dx}{
\displaystyle\int_\Omega p(x)\left|\dfrac u{k(u)}\right|^{p(x)}dx}.
$$
Following the argument contained in  \cite[Section 3]{Franzi}, it is possible to derive equation \eqref{ep} 
as the Euler-Lagrange equation corresponding to the minimization of the Rayleigh ratio 
\begin{equation}
\label{rayleigh}
\frac{K(u)}{k(u)}=\frac{\|\nabla u\|_{p(x)}}{\|u\|_{p(x)}},\quad\mbox{among all }u\in W^{1,p(x)}_0(\Omega)\setminus\{0\},
\end{equation}
where $\|\cdot\|_{p(x)}$ denotes the Luxemburg norm of the variable exponent Lebesgue space $L^{p(x)}(\Omega)$ (see
Section~\ref{recalls}).  
This minimization problem has been firstly introduced in
\cite{Franzi} as an appropriate replacement for the
{\em inhomogeneous} minimization problem
$$
\frac{\displaystyle\int_{\Omega}|\nabla u|^{p(x)} dx}{\displaystyle\int_{\Omega}|u|^{p(x)} dx},\quad\mbox{among all }u\in W^{1,p(x)}_0(\Omega)\setminus\{0\},
$$
which was previously considered in \cite{sticinesi} to define the first eigenvalue $\lambda_1$ of the $p(x)$-Laplacian.
In \cite{sticinesi}, sufficient conditions for $\lambda_1$ defined in this way to be zero or positive are provided. 
In particular, if $p(\cdot)$ has a strict local minimum (or maximum) in $\Omega$, then $\lambda_1=0$.
Arguing as in \cite[Lemma A.1]{Franzi}, it can be shown that the functionals $k$
and $K$ are differentiable with 
\begin{align*}
\langle K'(u),v\rangle &=\frac{\displaystyle\int_\Omega p(x)\left|\dfrac{\nabla u}{K(u)}\right|^{p(x)-2}\dfrac{\nabla u}{K(u)}\cdot\nabla v \,dx}{
\displaystyle\int_\Omega p(x)\left|\dfrac{\nabla u}{K(u)}\right|^{p(x)}dx} \quad\text{for all $u,v\in W^{1,p(x)}_0(\Omega)$,}  \\
\langle k'(u),v\rangle &=\frac{\displaystyle\int_\Omega p(x)\left|\dfrac{u}{k(u)}\right|^{p(x)-2}\dfrac{u}{k(u)}v\, dx}{\displaystyle\int_\Omega p(x)\left|\dfrac{u}{k(u)}\right|^{p(x)}dx} \quad\text{for all $u,v\in W^{1,p(x)}_0(\Omega)$}.
\end{align*}
Therefore, all critical values of \eqref{rayleigh} are eigenvalues of \eqref{ep} and vice versa.
The $m$th eigenvalue $\lambda^{(m)}_{p(x)}$ of \eqref{ep} can be obtained as 
$$
\lambda^{(m)}_{p(x)}:=\inf_{K\in\mathcal W^{(m)}_{p(x)}}\sup_{u\in K} \|\nabla u\|_{p(x)},
$$
where $\mathcal W^{(m)}_{p(x)}$ is the set of symmetric, compact subsets of 
$\{u\in W^{1,p(x)}_0(\Omega):\|u\|_{p(x)}=1\}$ 
such that $i(K)\ge m$, and $i$ denotes the Krasnosel'ski\u{\i} genus. 
In \cite{Franzi} existence and properties of the first eigenfunction were studied, 
while in \cite{MR3122341} a numerical method to compute the first eigenpair of \eqref{ep} 
was obtained and 
the symmetry breaking phenomena with respect to the 
constant case were observed.
The growth rate of this sequence of eigenvalues 
was investigated in \cite{persqu}, getting a natural replacement for the growth 
estimate for the case $p$ constant (cf.\ \cite{MR1017063,peral}),
$$
\lambda^{(m)}_p\sim m^{N/p},\quad\,\,\,
\lambda^{(m)}_{p}:=\inf_{K\in\mathcal W^{(m)}_{p}}\sup_{u\in K} \|\nabla u\|_{p}^p,
$$ 
where $\mathcal W^{(m)}_{p}$ is the set of symmetric, compact subsets of 
$\big\{u\in W^{1,p}_0(\Omega):\|u\|_{p}=1\big\}$.

\vskip2pt
\noindent
In this paper we focus on the right continuity of the maps
$$
\E_m:(C(\Omega),\|\cdot\|_\infty)\to\R,\quad\,\,
\E_m(p(\cdot)):=\lambda^{(m)}_{p(x)},\quad\,\, m\geq 1.
$$
We set
$$
\mathscr S:=\Big\{(p_h)\subset \mathscr C:\,
\text{$\exists\, \bar h\geq 1$ s.t.\ $\sup_{h\geq \bar h} \sup_\Omega p_{h}<p_I^*$,\,\,\, $p_I^*:=\frac{Np_I}{N-p_I},\,\,\, p_I:=\inf_{h\geq \bar h}\inf_\Omega p_{h}$
\Big\}}.
$$
We say that $\E_m$ is {\em right-continuous} if 
$$
\E_m(p_h(\cdot))\to \E_m(p(\cdot)),\quad \text{as $h\to\infty$},
$$
whenever $p\in\mathscr C,\,(p_h)\subset \mathscr S$, $p_h\to p$ uniformly in $\Omega$ and $p(x)\le p_h(x)$ for all $h\in\mathbb N$ and $x\in\Omega$.
\vskip3pt
\noindent
We have the following main result.

\begin{theorem}
\label{main}
%Let $p,\,(p_h)\subset \mathscr C$ be such that $p_h\to p$ uniformly in $\Omega$, with $p(x)\le p_h(x)$ 
%for all $h\in\mathbb N$ and $x\in\Omega$. Suppose that there exists $\bar h\in\mathbb N$ such that
%\begin{equation}
%p_S:=\sup_{h>\bar h} \sup_\Omega p_{h}<p_I^*,\,\,\quad\,\, p_I^*=\frac{Np_I}{N-p_I},\,\,\, p_I:=\inf_{h>\bar h}\inf_\Omega p_{h}.
%\end{equation} 
%Then, $T_m(p_h(\cdot))\to T_m(p(\cdot))$ as $h\to\infty$
$\E_m$ is right-continuous for all $m\ge 1$.
\end{theorem}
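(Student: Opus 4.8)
The plan is to prove separately the two one-sided estimates $\limsup_h\E_m(p_h)\le\E_m(p)$ and $\liminf_h\E_m(p_h)\ge\E_m(p)$; together with the elementary finiteness of $\E_m(p)$ for $p\in\mathscr C$ (any $m$-dimensional subspace of $C^\infty_c(\Omega)$, normalized in $\|\cdot\|_{p(x)}$, is a competitor of genus $m$), this gives Theorem~\ref{main}. Write $\eps_h:=\|p_h-p\|_\infty\to0$. Since $p\le p_h$ on the bounded set $\Omega$ one has continuous inclusions $L^{p_h(x)}(\Omega)\hookrightarrow L^{p(x)}(\Omega)$ and $W^{1,p_h(x)}_0(\Omega)\hookrightarrow W^{1,p(x)}_0(\Omega)$ with norms bounded uniformly in $h$; moreover, composing $L^{p_h(x)}(\Omega)\hookrightarrow L^{p_I}(\Omega)$ with the constant-exponent Sobolev embedding $W^{1,p_I}_0(\Omega)\hookrightarrow L^{p_I^*}(\Omega)$ and $L^{p_I^*}(\Omega)\hookrightarrow L^r(\Omega)$, one obtains for every fixed $r\in\bigl(\sup_{h\ge\bar h}\sup_\Omega p_h,\,p_I^*\bigr)$ a uniform subcritical embedding $\|u\|_{L^r}\le C\|\nabla u\|_{p_h(x)}$ for $u\in W^{1,p_h(x)}_0(\Omega)$, $h\ge\bar h$. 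The analytic heart is a comparison of the modulars $\rho_{q(x)}(g):=\int_\Omega|g|^{q(x)}dx$: splitting $\Omega$ into $\{|g|\le1\}$ and $\{|g|>1\}$ and using $1-t^s\le s|\ln t|$ for $0<t\le1$ (with $s=p_h(x)-p(x)\le\eps_h$), one gets with no integrability hypothesis
\[
\rho_{p(x)}(g)\ \le\ \rho_{p_h(x)}(g)+C\,\eps_h,
\]
because on $\{|g|>1\}$ the sign is favourable; while, using on $\{|g|>1\}$ the monotonicity of $s\mapsto t^s$ together with the uniform $L^r$-bound to dominate the contribution of large values, one gets $\rho_{p(x)}(g)\ge\rho_{p_h(x)}(g)-\tau_h$ with $\tau_h\to0$, whenever $\|g\|_{L^r}$ and $\rho_{p_h(x)}(g)$ are bounded (in particular for $g\in L^\infty$ with support in a fixed compact subset of $\Omega$). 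Via the usual modular--norm relations these translate into $\|g\|_{p(x)}\le(1+O(\eps_h))\|g\|_{p_h(x)}$ always, and $\|g\|_{p(x)}\ge(1-o(1))\|g\|_{p_h(x)}$ for such bounded families, with the $o(1)$ uniform over the family.

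\emph{Upper bound.} Fix $\delta>0$ and $K\in\mathcal W^{(m)}_{p(x)}$ with $\sup_{u\in K}\|\nabla u\|_{p(x)}<\E_m(p)+\delta$. Using density of $C^\infty_c(\Omega)$ in $W^{1,p(x)}_0(\Omega)$, a finite subcover of $K$, a partition of unity on the compact metric space $K$, and odd symmetrization, one constructs for each $\eta>0$ an odd continuous map $\Phi:K\to C^\infty_c(\Omega)$ whose image lies in a fixed finite-dimensional subspace $V\subset C^\infty_c(\Omega)$ and with $\sup_{u\in K}\|\Phi(u)-u\|_{W^{1,p(x)}}<\eta$; taking $\eta<\min_{u\in K}\|u\|_{W^{1,p(x)}_0}$ keeps $\Phi(u)\ne0$, so $\Phi(K)$ is compact, symmetric, bounded in $C^1$, supported in a fixed compact subset of $\Omega$, with $i(\Phi(K))\ge i(K)\ge m$. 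Renormalizing, $K_h:=\{\Phi(u)/\|\Phi(u)\|_{p_h(x)}:u\in K\}\in\mathcal W^{(m)}_{p_h(x)}$, so
\[
\E_m(p_h)\ \le\ \sup_{u\in K}\frac{\|\nabla\Phi(u)\|_{p_h(x)}}{\|\Phi(u)\|_{p_h(x)}} .
\]
Since the $\Phi(u)$ are uniformly bounded in $C^1$ with fixed compact support, the modular comparison lets $h\to\infty$ in the right-hand side, yielding $\sup_{u\in K}\|\nabla\Phi(u)\|_{p(x)}/\|\Phi(u)\|_{p(x)}\le(\E_m(p)+\delta+O(\eta))/(1-O(\eta))$; letting $\eta\to0$ and then $\delta\to0$ gives $\limsup_h\E_m(p_h)\le\E_m(p)$.

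\emph{Lower bound.} By the upper bound $(\E_m(p_h))$ is bounded, so it suffices to bound from below by $\E_m(p)$ every subsequential limit $L=\lim_k\E_m(p_{h_k})$. Fix $\delta>0$ and choose $K_h\in\mathcal W^{(m)}_{p_h(x)}$ with $\sup_{u\in K_h}\|\nabla u\|_{p_h(x)}\le\E_m(p_h)+\delta\le M$ for $h$ large. Each $u\in K_h$ has $\|u\|_{p_h(x)}=1$ and $\|u\|_{L^r}\le CM$; the modular comparison applied to $g=u$ (the bounded-$L^r$ version) gives $\|u\|_{p(x)}\ge1-\tau_h$, and applied to $g=\nabla u/\|\nabla u\|_{p_h(x)}$ (the version without integrability hypothesis) gives $\|\nabla u\|_{p(x)}\le(1+O(\eps_h))\|\nabla u\|_{p_h(x)}$, uniformly in $u\in K_h$. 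As $u\mapsto u/\|u\|_{p(x)}$ is an odd homeomorphism of $K_h$ onto $\widehat K_h:=\{u/\|u\|_{p(x)}:u\in K_h\}\in\mathcal W^{(m)}_{p(x)}$ (here $p\le p_h$ ensures $\widehat K_h\subset W^{1,p(x)}_0(\Omega)$),
\[
\E_m(p)\ \le\ \sup_{u\in K_h}\frac{\|\nabla u\|_{p(x)}}{\|u\|_{p(x)}}\ \le\ \frac{1+O(\eps_h)}{1-\tau_h}\bigl(\E_m(p_h)+\delta\bigr);
\]
letting $h\to\infty$ along the subsequence and then $\delta\to0$ yields $\E_m(p)\le L$.

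The main obstacle is the construction of $\Phi$ in the upper bound, i.e.\ realizing the $m$-th minimax level $\E_m(p)$, up to an arbitrarily small error, by symmetric compact sets of $C^1$-bounded, uniformly compactly supported functions without letting the Krasnosel'ski\u{\i} genus drop below $m$ --- which is precisely what is needed to make the exponent-dependent Luxemburg norms converge \emph{uniformly} over the competitor, hence to make the minimax values converge. The role of the structural hypotheses is then transparent: the subcriticality in the definition of $\mathscr S$ produces the uniform embedding into the fixed $L^r$ feeding the modular comparison in the lower bound, while the one-sided ordering $p\le p_h$ both supplies the inclusion $W^{1,p_h(x)}_0\hookrightarrow W^{1,p(x)}_0$ and fixes the favourable sign in the modular inequalities --- which is why only \emph{right}-continuity is proved here.
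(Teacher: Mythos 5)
Your argument is correct, but it takes a genuinely different route from the paper. The paper never manipulates competitor sets directly: it casts $u\mapsto\|\nabla u\|_{p_h(x)}$ as functionals $\mathscr E_{p_h(x)}$ on $L^1(\Omega)$, proves $\Gamma$-convergence to $\mathscr E_{p(x)}$ (the $\Gamma$-$\limsup$ via density of $C^1_c$ and pointwise convergence of the Luxemburg norms on smooth functions, the $\Gamma$-$\liminf$ via the ordering $p\le p_h$ and the embedding $W^{1,p_h(x)}_0\hookrightarrow W^{1,p(x)}_0$), proves an equicoercivity/continuous-convergence statement for the constraints $g_{p_h(x)}$ using the uniform subcritical embedding, identifies the minimax values over $L^1$-compact sets with those over $W^{1,p(x)}_0$-compact sets, and then invokes the abstract stability theorem of Degiovanni--Marzocchi for minimax values under $\Gamma$-convergence. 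Your proof is the ``by hand'' counterpart of exactly these ingredients: your upper bound (finite subcover, partition of unity, odd symmetrization into a finite-dimensional subspace of $C^\infty_c(\Omega)$, then renormalization in $\|\cdot\|_{p_h(x)}$) plays the role of the $\Gamma$-$\limsup$/recovery construction lifted to genus-$m$ sets, and your lower bound (uniform $L^r$ bound with $p_S<r<p_I^*$ feeding the quantitative modular comparison, plus the embedding from the ordering $p\le p_h$) plays the role of the $\Gamma$-$\liminf$ and of the compactness theorem. The hypotheses are used in the same places in both proofs, so the two are faithful to the same analytic content. What the paper's route buys is that the topological bookkeeping (preserving the genus while approximating, passing between topologies on compact sets) is delegated to the abstract theorem, and no uniform-over-the-competitor convergence of norms is ever needed; what your route buys is a self-contained, quantitative argument with explicit $O(\eps_h)$-type rates in the lower bound, at the price of having to verify uniformity of the modular/norm comparisons over the competitor families and to carry out the genus-preserving finite-dimensional approximation yourself. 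The only points deserving an explicit line in a written-up version are (i) that on a compact set the $W^{1,p_h(x)}_0$- and $W^{1,p(x)}_0$-induced topologies coincide (continuous bijection from a compact space to a Hausdorff space), so the genus of $K_h$ does not drop when you regard it in $W^{1,p(x)}_0(\Omega)$, and (ii) that $\inf_{u\in K}\|\Phi(u)\|_{p(x)}>0$ for $\eta$ small, so the renormalized sets are well defined; both are immediate.
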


\begin{remark}\rm
For a constant $p\in (1,N)$, problem \eqref{ep} 
reduces to the well-known eigenvalue problem
for the $p$-Laplacian operator (see e.g.\ \cite{LindAMS,Lindbook}), namely
$$
-{\rm div}(|\nabla u|^{p-2}\nabla u)=\lambda |u|^{p-2}u,\quad u\in W^{1,p}_0(\Omega).
$$
In this particular case the continuity of variational eigenvalue has been investigated in \cite{huang,champion_depascale2007,
parini2011,littig_schuricht2014,stab,stab2}
and, more recently, in \cite{DM14} in presence of a weight function and including the case where the domain $\Omega$ is unbounded. 
With exception of \cite{stab,stab2,huang}, all these
contributions tackle the problem by studying the $\Gamma$-convergence of the norm functionals.
\end{remark}

\begin{remark}\rm
As pointed out by Lindqvist \cite[see Section 7]{stab}, already in the constant case, the convergence from {\em below} of the $(p_h)$ to $p$ does {\em not} guarantee the convergence of the eigenvalues, unless the domain $\Omega$ is sufficiently smooth.
\end{remark}

\begin{remark}\rm 
The same result holds replacing the Krasnosel'ski\u{\i} genus
with a general index $\operatorname{i}$ with the following properties:
\begin{itemize}
\item[(i)]
$\operatorname{i}(K)$ is an integer greater or equal than $1$ and is defined
whenever $K$ is a nonempty, compact and symmetric subset of a
topological vector space such that $0\not\in K$;
\item[(ii)]
if $X$ is a topological vector space and
$K\subseteq X\setminus\{0\}$ is compact, symmetric and nonempty,
then there exists an open subset $U$ of $X\setminus\{0\}$
such that $K\subseteq U$ and
$\operatorname{i}(\widehat{K}) \leq \operatorname{i}(K)$
for any compact, symmetric and nonempty $\widehat{K}\subseteq U$\,;
\item[(iii)]
if $X, Y$ are two topological vector spaces,
$K\subseteq X\setminus\{0\}$ is compact, symmetric and nonempty
and $\pi:K\to Y\setminus\{0\}$ is continuous and
odd, we have $\operatorname{i}(\pi(K)) \geq \operatorname{i}(K)$\,.
\end{itemize}
Examples are the Krasnosel'ski\u{\i} genus and
the $\mathbb{Z}_2$-cohomo\-logical index
\cite{fadell_rabinowitz1977, fadell_rabinowitz1978}.
\end{remark}

\medskip
\section{Preliminary results}
\label{recalls}

\noindent
The variable exponent Lebesgue space $L^{p(x)}(\Omega)$ consists of all measurable functions $u:\Omega\to\mathbb R$ having $\varrho_{p(x)}(u)<\infty$, where 
$$\varrho_{p(x)}(u):=\int_{\Omega} | u(x) |^{p(x)} dx$$
is the $p(x)$-modular. $L^{p(x)}(\Omega)$ is endowed with the Luxemburg norm $\|\cdot\|_{p(x)}$ defined by
\begin{equation*} 
\label{Luxdef}
\lVert u \rVert_{p(x)}:= 
\inf \Big\{ \gamma > 0 : \varrho_{p(x)}(u/\gamma) \leq 1 \Big\}.
\end{equation*}
The norm $\|u\|_{p(x)}$ is in close relation with the $p(x)$-modular $\varrho_{p(x)}(u)$, as shown for instance by unit ball property \cite[Theorem 1.3]{cinese} which we report here for completeness.

\begin{proposition}\label{ubp}
Let $p\in L^\infty(\Omega)$ with $1<p_-\le p_+<\infty$. Then, for all $u\in L^{p(x)}(\Omega)$ the following equivalence holds
$$\|u\|_{p(x)}<1 \,(=1;\,>1)\quad\Longleftrightarrow\quad \varrho_{p(x)}(u)<1 \,(=1;\,>1).$$
\end{proposition}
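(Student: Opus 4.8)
The plan is to reduce the whole statement to the qualitative behaviour of the single scalar function $\phi(\gamma):=\varrho_{p(x)}(u/\gamma)=\int_\Omega |u(x)|^{p(x)}\gamma^{-p(x)}\,dx$, $\gamma>0$. By definition the Luxemburg norm is $\|u\|_{p(x)}=\inf\{\gamma>0:\phi(\gamma)\le 1\}$, so the entire proposition is about locating where $\phi$ crosses the level $1$. First I would dispose of the trivial case $u=0$, for which $\varrho_{p(x)}(u)=0$ and $\|u\|_{p(x)}=0$ are both $<1$, and then assume that $u\ne 0$ on a set of positive measure.

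Next I would establish three properties of $\phi$ on $(0,\infty)$. (a) Finiteness and continuity: since $u\in L^{p(x)}(\Omega)$ we have $\varrho_{p(x)}(u)<\infty$, and the elementary bounds $\gamma^{-p(x)}\le\gamma^{-p_+}$ for $\gamma\le 1$ and $\gamma^{-p(x)}\le\gamma^{-p_-}$ for $\gamma\ge1$ (both using $1<p_-\le p_+<\infty$) dominate the integrand by a constant multiple of $|u|^{p(x)}\in L^1(\Omega)$ on each compact $\gamma$-interval, so dominated convergence gives $\phi$ finite and continuous. (b) Strict monotonicity: for $\gamma_1<\gamma_2$ the map $t\mapsto t^{p(x)}$ is strictly increasing, hence $|u/\gamma_1|^{p(x)}>|u/\gamma_2|^{p(x)}$ on $\{u\ne0\}$, a set of positive measure, which forces $\phi(\gamma_1)>\phi(\gamma_2)$. (c) Boundary behaviour: $\phi(\gamma)\ge\gamma^{-p_-}\varrho_{p(x)}(u)\to+\infty$ as $\gamma\to0^+$ and $\phi(\gamma)\le\gamma^{-p_-}\varrho_{p(x)}(u)\to0$ as $\gamma\to+\infty$, using $\varrho_{p(x)}(u)>0$.

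With (a)--(c) in hand, the sublevel set $\{\gamma:\phi(\gamma)\le1\}$ is a closed half-line $[\gamma_0,\infty)$, where $\gamma_0$ is the unique root of $\phi(\gamma)=1$ furnished by the intermediate value theorem and strict monotonicity; consequently $\|u\|_{p(x)}=\gamma_0$ and, crucially, $\phi(\|u\|_{p(x)})=1$. The three equivalences then drop out by comparing $\phi$ at $\gamma=\|u\|_{p(x)}$ and at $\gamma=1$: if $\|u\|_{p(x)}<1$ then $\varrho_{p(x)}(u)=\phi(1)<\phi(\|u\|_{p(x)})=1$; if $\|u\|_{p(x)}=1$ then $\varrho_{p(x)}(u)=\phi(1)=1$; and if $\|u\|_{p(x)}>1$ then $\varrho_{p(x)}(u)=\phi(1)>\phi(\|u\|_{p(x)})=1$. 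Since the three alternatives partition the possibilities on both sides, each implication is automatically reversible and all three equivalences hold simultaneously.

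The only genuinely delicate point is property (a): one must check that the dominating function can be chosen in $L^1(\Omega)$ uniformly for $\gamma$ near the value of interest. This is exactly where $p_+<\infty$ is used, preventing $\gamma^{-p(x)}$ from exceeding the integrable rate $\gamma^{-p_+}$ as $\gamma\to0^+$, and where $p_->0$ controls the decay at infinity. Everything beyond this is a routine consequence of the monotone continuous scalar profile $\phi$ and requires no further input.
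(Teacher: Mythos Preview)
Your proof is correct. Note, however, that the paper does not supply its own proof of this proposition: it is stated ``for completeness'' and attributed to \cite[Theorem~1.3]{cinese}. The argument you give --- studying the scalar profile $\gamma\mapsto\varrho_{p(x)}(u/\gamma)$, showing it is continuous, strictly decreasing, with limits $+\infty$ and $0$ at the endpoints, hence meeting level~$1$ at a unique point which must then equal the Luxemburg norm --- is exactly the standard proof found in the cited reference and in the monograph \cite{Base}. There is nothing further to compare against in the paper itself.
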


The variable exponent Sobolev space $W^{1,p(x)}(\Omega)$ consists of all $L^{p(x)}(\Omega)$-functions 
having distributional gradient $\nabla u\in L^{p(x)}(\Omega)$, and is 
endowed with the norm 
$$
\|u\|_{1,p(x)}=\|u\|_{p(x)}+\|\nabla u\|_{p(x)}.
$$
Under the smoothness assumption \eqref{logholder}, we denote by $W^{1,p(x)}_0(\Omega)$ the closure of 
$C^\infty_0(\Omega)$ with respect to the norm $\|u\|_{1,p(x)}$ and we endow 
$W^{1,p(x)}_0(\Omega)$ with the equivalent norm  $\|\nabla u\|_{p(x)}$. 
For further details on the variable exponent Lebesgue and Sobolev spaces, we refer the reader to \cite{Base}.
\smallskip
\noindent
We now recall from \cite{dalmaso1993} the notion of $\Gamma$-convergence that will be useful in the sequel.

\begin{definition}\rm 
Let $X$ be a metrizable topological space and let $(f_h)$ be a sequence of functions from $X$ to $\overline{\mathbb{R}}$. The \emph{$\Gamma$-lower limit} and the \emph{$\Gamma$-upper limit} of the sequence $(f_h)$ are the functions from $X$ to $\overline{\mathbb{R}}$ defined by 
\begin{gather*}
\Big(\Gamma-\liminf_{h\to\infty} f_h\Big)(u)
= \sup_{U\in \mathcal{N}(u)}
\Big[\liminf_{h\to\infty}
\bigl(\inf\{f_h(v):v\in U\}
\bigr)\Big]\,,\\
\Big(\Gamma-\limsup_{h\to\infty} f_h\Big)(u)
=\sup_{U\in \mathcal{N}(u)}
\Big[\limsup_{h\to\infty}
\bigl(\inf\{f_h(v):v\in U\}\bigr)\Big]\,,
\end{gather*}
where $\mathcal{N}(u)$ denotes the family of all open neighborhoods of $u$ in $X$.
If there exists a function $f:X\to\overline{\R}$ such that 
$$
\Gamma-\liminf_{h\to\infty}f_h=\Gamma-\limsup_{h\to\infty}f_h=f,
$$
then we write $\Gamma-\lim\limits_{h\to\infty} f_h=f$
and we say that $(f_h)$ \emph{$\Gamma$-converges} to its \emph{$\Gamma$-limit} $f$.
\end{definition}

\noindent
For any $p\in\mathscr C$, we define $\mathscr E_{p(x)}:L^1(\Omega)\to[0,\infty]$ 
as
\begin{equation}\label{Epx}
\mathscr E_{p(x)}(u):=\begin{cases}\|\nabla u\|_{p(x)}\quad&\mbox{if }u\in W^{1,p(x)}_0(\Omega),\\
+\infty\quad&\mbox{otherwise}\end{cases}
\end{equation}  
and $g_{p(x)}:L^1(\Omega)\to[0,\infty)$  as
$$g_{p(x)}(u):=\begin{cases}\|u\|_{p(x)}\quad&\mbox{if }u\in L^{p(x)}(\Omega),\\ % \qquad\mbox{\bf or $\boldsymbol{u\in L^{p^*(x)}(\Omega)}$?}
0\quad&\mbox{otherwise.}\end{cases}$$

\begin{proposition}\label{p51} The following properties hold:
\begin{itemize}
\item[(a)] $g_{p(x)}$ is even and positively homogeneous of degree $1$;
\item[(b)] for every $b\in\mathbb R$ the restriction of $g_{p(x)}$ to $\{u\in L^1(\Omega)\,:\, \mathscr E_{p(x)}(u)\le b\}$ is continuous. 
\end{itemize}
\end{proposition}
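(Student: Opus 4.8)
The plan is to handle (a) by directly unwinding the definitions, and (b) by combining the Poincar\'e inequality in $W^{1,p(x)}_0(\Omega)$ with the compact Sobolev embedding for variable exponents. For (a), observe first that $u\in L^{p(x)}(\Omega)$ if and only if $-u\in L^{p(x)}(\Omega)$, and if and only if $tu\in L^{p(x)}(\Omega)$ for $t>0$; on the complement $g_{p(x)}\equiv 0$, so the two claimed identities are trivial there. On $L^{p(x)}(\Omega)$, evenness follows from $\varrho_{p(x)}(-u/\gamma)=\varrho_{p(x)}(u/\gamma)$ for every $\gamma>0$ (since $|-u|^{p(x)}=|u|^{p(x)}$ pointwise), so the two Luxemburg infima coincide; positive $1$-homogeneity is just the homogeneity of the norm $\|\cdot\|_{p(x)}$, or equivalently the rescaling $\gamma\mapsto\gamma/t$ in the infimum defining $\|tu\|_{p(x)}$.

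For (b), fix $b\in\R$. If $b<0$ the set $K_b:=\{u\in L^1(\Omega):\mathscr E_{p(x)}(u)\le b\}$ is empty, so assume $b\ge0$. By \eqref{Epx} every $u\in K_b$ lies in $W^{1,p(x)}_0(\Omega)\subset L^{p(x)}(\Omega)$, hence on $K_b$ we have $g_{p(x)}(u)=\|u\|_{p(x)}$, and the Poincar\'e inequality (valid under \eqref{logholder}, see \cite{Base}) gives $\|u\|_{p(x)}\le C\|\nabla u\|_{p(x)}=C\,\mathscr E_{p(x)}(u)\le Cb$; in particular $K_b$ is bounded in $W^{1,p(x)}_0(\Omega)$. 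Since $L^1(\Omega)$ is metrizable, it suffices to take a sequence $(u_n)\subset K_b$ with $u_n\to u$ in $L^1(\Omega)$ and $u\in K_b$, and to show $\|u_n\|_{p(x)}\to\|u\|_{p(x)}$.

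The crux is the compact embedding $W^{1,p(x)}_0(\Omega)\hookrightarrow\hookrightarrow L^{p(x)}(\Omega)$, which holds because \eqref{p-+} yields $\inf_\Omega\bigl(p^*(x)-p(x)\bigr)=\inf_\Omega\tfrac{p(x)^2}{N-p(x)}\ge\tfrac{p_-^2}{N}>0$, together with \eqref{logholder} (see \cite{Base}). I would then argue by contradiction: if $\|u_n\|_{p(x)}\not\to\|u\|_{p(x)}$, choose a subsequence along which $\bigl|\,\|u_n\|_{p(x)}-\|u\|_{p(x)}\,\bigr|\ge\delta>0$; by boundedness in $W^{1,p(x)}_0(\Omega)$ and the compact embedding a further subsequence converges strongly in $L^{p(x)}(\Omega)$ to some $w$, and since $\Omega$ is bounded we have $L^{p(x)}(\Omega)\hookrightarrow L^1(\Omega)$, so this strong $L^{p(x)}$-limit must equal the $L^1$-limit $u$. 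Continuity of the norm $\|\cdot\|_{p(x)}$ then forces $\|u_n\|_{p(x)}\to\|u\|_{p(x)}$ along that subsequence, a contradiction; hence the full sequence converges and (b) follows. The only non-routine ingredient is the compact embedding (this is where \eqref{logholder} is genuinely used); the point to keep in mind is that the sublevel sets are formed in $L^1(\Omega)$, so one cannot invoke weak lower semicontinuity in $W^{1,p(x)}_0(\Omega)$ directly — it is precisely the $L^1$-convergence of $(u_n)$ that identifies the strong $L^{p(x)}$-limit.
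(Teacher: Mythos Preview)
Your proof is correct and follows essentially the same route as the paper: part (a) is dispatched by definition, and part (b) hinges on the compact embedding $W^{1,p(x)}_0(\Omega)\hookrightarrow\hookrightarrow L^{p(x)}(\Omega)$ together with identification of limits via the $L^1$-convergence. The only cosmetic differences are that the paper passes through reflexivity and weak convergence in $W^{1,p(x)}_0(\Omega)$ before invoking compactness (whereas you apply the compact embedding directly to the bounded sequence), and that your explicit appeal to Poincar\'e is redundant since the paper already equips $W^{1,p(x)}_0(\Omega)$ with the equivalent norm $\|\nabla u\|_{p(x)}$.
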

\begin{proof} (a) follows easily from the definition of $g_{p(x)}$.
(b) Let $(u_n)\subset \{u\in L^1(\Omega)\,:\, \mathscr E_{p(x)}(u)\le b\}$ converge to $u$ in $L^1(\Omega)$ and consider a subsequence $(u_{h_n})$. By the definition of $\mathscr E_{p(x)}$, we know that $(u_{h_n})$ is bounded in $W^{1,p(x)}_0(\Omega)$ which is reflexive, hence there exists a subsequence  $(u_{h_{n_j}})$ that converges weakly to $\bar u$ in $W^{1,p(x)}_0(\Omega)$. Since $W^{1,p(x)}_0(\Omega)$ is compactly embedded in $L^{p(x)}(\Omega)$, cf. \cite[Proposition 2.2 and Lemma 5.5]{diening}, $u_{h_{n_j}}$ converges strongly to $\bar u$ in $L^{p(x)}(\Omega)$. By the arbitrariness of the subsequence $(u_{h_n})$, we get that the whole sequence $u_n\to\bar u$ in  $L^{p(x)}(\Omega)$ and also in $L^1(\Omega)$. Therefore, $u=\bar u$ and the proof is concluded. 
\end{proof}

\begin{lemma}\label{conv} Let $p,\,(p_h)\subset \mathscr C$ be such that $p_h\to p$ pointwise. Then, for all $w\in C^1_\mathrm{c}(\Omega)$
$$\lim_{h\to\infty}\|\nabla w\|_{p_h(x)}=\|\nabla w\|_{p(x)}.$$ 
\end{lemma}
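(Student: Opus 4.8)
The plan is to reduce the assertion to the convergence of the modulars and then transfer it to the Luxemburg norms via the unit ball property (Proposition~\ref{ubp}). Write $v:=|\nabla w|$, where $|\cdot|$ denotes the Euclidean norm; this is a nonnegative continuous function supported in a compact set $S\subset\Omega$, with $M:=\|v\|_\infty<\infty$. If $v\equiv0$ both sides vanish and there is nothing to prove, so assume $v\not\equiv0$ and set $\gamma_0:=\|\nabla w\|_{p(x)}=\|v\|_{p(x)}>0$ and $\gamma_h:=\|v\|_{p_h(x)}>0$; the goal is to show $\gamma_h\to\gamma_0$.

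The key preliminary step is the following: for every fixed $\gamma>0$,
$$
\varrho_{p_h(x)}(v/\gamma)\longrightarrow\varrho_{p(x)}(v/\gamma)\qquad\text{as }h\to\infty .
$$
Indeed, since $p_h\to p$ pointwise and the map $t\mapsto a^{t}$ is continuous on $(0,\infty)$ for every $a\ge0$ (with $0^{t}=0$ for $t>0$), the integrands $\bigl(v(x)/\gamma\bigr)^{p_h(x)}$ converge pointwise on $\Omega$ to $\bigl(v(x)/\gamma\bigr)^{p(x)}$. Moreover, each $p_h\in\mathscr C$ satisfies \eqref{p-+}, hence $1<p_h(x)<N$ for all $h$ and all $x$, which yields the uniform domination
$$
\bigl(v(x)/\gamma\bigr)^{p_h(x)}\le\max\{1,(M/\gamma)^{N}\}\,\mathbf 1_{S}(x)\in L^1(\Omega).
$$
The dominated convergence theorem then gives the claim (and in particular $\varrho_{p(x)}(v/\gamma)<\infty$).

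With this at hand I would run a two‑sided squeeze using Proposition~\ref{ubp}. For the upper bound, fix $\eps>0$: since $\|v/(\gamma_0+\eps)\|_{p(x)}=\gamma_0/(\gamma_0+\eps)<1$, Proposition~\ref{ubp} gives $\varrho_{p(x)}(v/(\gamma_0+\eps))<1$, so by the preliminary step $\varrho_{p_h(x)}(v/(\gamma_0+\eps))<1$ for all $h$ large, and Proposition~\ref{ubp} again yields $\gamma_h=\|v\|_{p_h(x)}\le\gamma_0+\eps$; hence $\limsup_h\gamma_h\le\gamma_0$. For the lower bound, fix $\eps\in(0,\gamma_0)$: since $\|v/(\gamma_0-\eps)\|_{p(x)}=\gamma_0/(\gamma_0-\eps)>1$, Proposition~\ref{ubp} gives $\varrho_{p(x)}(v/(\gamma_0-\eps))>1$, so $\varrho_{p_h(x)}(v/(\gamma_0-\eps))>1$ for all $h$ large, whence $\gamma_h>\gamma_0-\eps$ and $\liminf_h\gamma_h\ge\gamma_0$. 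Letting $\eps\to0$ in both inequalities gives $\gamma_h\to\gamma_0$.

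The only point that genuinely requires care is the domination in the preliminary step, where it is essential that the exponents be bounded uniformly in $h$; here this is automatic, because membership of each $p_h$ in $\mathscr C$ forces $p_h<N$ pointwise. Everything else is a routine manipulation of the unit ball property, and notably no use of the log‑Hölder condition \eqref{logholder} is needed for this lemma.
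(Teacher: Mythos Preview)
Your proof is correct and follows essentially the same route as the paper: dominated convergence (using the uniform bound $p_h<N$) to obtain convergence of the modulars, then the unit ball property (Proposition~\ref{ubp}) to transfer this to the Luxemburg norms. The only cosmetic difference is that the paper invokes an external lower-semicontinuity result for the $\liminf$ inequality and only argues the $\limsup$ by hand, whereas you handle both directions symmetrically via the same modular argument---which is arguably cleaner and more self-contained.
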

\begin{proof} By means of \cite[Corollary 3.5.4]{Base}, we know that 
$$
\|\nabla w\|_{p(x)}\le \liminf_{h\to\infty}\|\nabla w\|_{p_h(x)}. 
$$
It remains to prove that 
$$
\|\nabla w\|_{p(x)}\ge \limsup_{h\to\infty}\|\nabla w\|_{p_h(x)}. 
$$
If $\nabla w=0$ in $\Omega$, the conclusion in obvious, so we can assume that $\|\nabla w\|_{p(x)}>0$. By hypothesis, $p_h\to p$ pointwise and $p_h(x)<N$ for all $h\in\mathbb N$ and $x\in\Omega$, hence
$$
\begin{gathered}
\left|\frac{\alpha\nabla w}{\|\nabla w\|_{p(x)}}\right|^{p_h(x)}\to\left|\frac{\alpha\nabla w}{\|\nabla w\|_{p(x)}}\right|^{p(x)}\quad\mbox{for all }x\in\Omega,\\
\left|\frac{\alpha\nabla w}{\|\nabla w\|_{p(x)}}\right|^{p_h(x)}\le \left(1+\frac{\alpha\nabla w}{\|\nabla w\|_{p(x)}}\right)^N\in L^1(\Omega)\quad\mbox{for all }h\in\mathbb N.
\end{gathered}
$$
Therefore, by the dominated convergence theorem, we obtain 
$$\lim_{h\to\infty}\varrho_{p_h(x)}\left(\frac{\alpha \nabla w}{\|\nabla w\|_{p(x)}}\right)=\varrho_{p(x)}\left(\frac{\alpha \nabla w}{\|\nabla w\|_{p(x)}}\right)\le \alpha \varrho_{p(x)}\left(\frac{\nabla w}{\|\nabla w\|_{p(x)}}\right)=\alpha<1$$
for all $\alpha\in(0,1)$.
Thus, for $h$ sufficiently large $\varrho_{p_h(x)}\left(\alpha \nabla w/\|\nabla w\|_{p(x)}\right)<1$,
which in turn gives 
$$
\Big\|\alpha\nabla w/\|\nabla w\|_{p(x)}\Big\|_{p_h(x)}<1
$$
by Proposition \ref{ubp}. Whence 
$$
\limsup_{h\to\infty}\|\nabla w\|_{p_h(x)}\le\frac{\|\nabla w\|_{p(x)}}\alpha\quad\mbox{for all }\alpha\in(0,1)
$$
and by the arbitrariness of $\alpha$ the assertion is proved. 
\end{proof}

\begin{theorem}\label{gsup}
Let $p,\,(p_h)\subset \mathscr C$ be such that $p_h\to p$ pointwise. Then 
\begin{equation}\label{glimsup}\mathscr E_{p(x)}(u)\ge\Big(\Gamma-\limsup_{h\to\infty}\mathscr E_{p_h(x)}\Big)(u)
\,\,\quad\mbox{for all }u\in L^1(\Omega).\end{equation}
\end{theorem}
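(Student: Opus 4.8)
The plan is to establish the $\Gamma$-$\limsup$ inequality by the standard recovery sequence argument: for a fixed $u\in L^1(\Omega)$ it suffices to exhibit, for each $u$, a sequence $u_h\to u$ in $L^1(\Omega)$ with $\limsup_{h\to\infty}\mathscr E_{p_h(x)}(u_h)\le \mathscr E_{p(x)}(u)$. Indeed, the $\Gamma$-$\limsup$ at $u$ is always bounded above by $\liminf_{h}\mathscr E_{p_h(x)}(u_h)$ for any sequence converging to $u$, so a recovery sequence controlling the $\limsup$ gives \eqref{glimsup}. If $u\notin W^{1,p(x)}_0(\Omega)$ then $\mathscr E_{p(x)}(u)=+\infty$ and there is nothing to prove, so we may assume $u\in W^{1,p(x)}_0(\Omega)$.

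The core of the argument is a density/diagonalization step. By definition $W^{1,p(x)}_0(\Omega)$ is the closure of $C^1_{\mathrm c}(\Omega)$ (equivalently $C^\infty_0(\Omega)$) in the norm $\|\nabla\cdot\|_{p(x)}$, so I can pick $w_k\in C^1_{\mathrm c}(\Omega)$ with $\|\nabla(w_k-u)\|_{p(x)}\to 0$; in particular $w_k\to u$ in $L^1(\Omega)$ (using the continuous embedding $W^{1,p(x)}_0(\Omega)\hookrightarrow L^{p(x)}(\Omega)\hookrightarrow L^1(\Omega)$, or the Poincaré inequality) and $\mathscr E_{p(x)}(w_k)=\|\nabla w_k\|_{p(x)}\to \mathscr E_{p(x)}(u)$. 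For each fixed $k$, Lemma~\ref{conv} gives $\lim_{h\to\infty}\|\nabla w_k\|_{p_h(x)}=\|\nabla w_k\|_{p(x)}$. Hence for each $k$ there is $h_k$ such that for $h\ge h_k$ one has $\big|\|\nabla w_k\|_{p_h(x)}-\|\nabla w_k\|_{p(x)}\big|<1/k$ and $\|w_k-u\|_{L^1(\Omega)}<1/k$; arranging $h_k$ strictly increasing and setting $u_h:=w_k$ for $h_k\le h<h_{k+1}$ produces a recovery sequence. Then $u_h\to u$ in $L^1(\Omega)$ and
$$
\limsup_{h\to\infty}\mathscr E_{p_h(x)}(u_h)=\limsup_{k\to\infty}\|\nabla w_k\|_{p_{h}(x)}\le \limsup_{k\to\infty}\Big(\|\nabla w_k\|_{p(x)}+\tfrac1k\Big)=\mathscr E_{p(x)}(u),
$$
which is exactly what is needed, since $\big(\Gamma-\limsup_h\mathscr E_{p_h(x)}\big)(u)\le\liminf_h\mathscr E_{p_h(x)}(u_h)\le\limsup_h\mathscr E_{p_h(x)}(u_h)$.

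The one point that requires a little care — and the main obstacle — is making sure the recovery sequence constructed by diagonalization genuinely converges to $u$ in the topology of $X=L^1(\Omega)$ used in the definition of $\Gamma$-convergence, and that the truncation $u_h=w_k$ is admissible (finite energy for $\mathscr E_{p_h(x)}$): this holds because $w_k\in C^1_{\mathrm c}(\Omega)\subset W^{1,p_h(x)}_0(\Omega)$ for every $h$, so $\mathscr E_{p_h(x)}(w_k)=\|\nabla w_k\|_{p_h(x)}<\infty$. A subtle bookkeeping issue is that in Lemma~\ref{conv} the convergence $p_h\to p$ is only assumed pointwise, which is exactly the hypothesis here, so Lemma~\ref{conv} applies directly with no extra work. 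Everything else is the routine observation that the $\Gamma$-$\limsup$ of a sequence of functionals on a metrizable space is characterized by recovery sequences — which is immediate from the definition given above once one notes $X=L^1(\Omega)$ is metrizable.
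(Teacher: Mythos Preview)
Your argument is correct and follows essentially the same route as the paper's proof: both rely on the density of $C^1_{\mathrm c}(\Omega)$ in $W^{1,p(x)}_0(\Omega)$ together with Lemma~\ref{conv}. The only difference is packaging: the paper works directly with the neighborhood definition of the $\Gamma$-$\limsup$ (fixing $b>\mathscr E_{p(x)}(u)$ and $\delta>0$, picking a single $w\in C^1_{\mathrm c}(\Omega)$ close to $u$ with $\|\nabla w\|_{p(x)}<b$, and letting $h\to\infty$), whereas you build an explicit recovery sequence via diagonalization. These are equivalent formulations of the same idea.

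One small slip worth correcting: your parenthetical claim that ``the $\Gamma$-$\limsup$ at $u$ is always bounded above by $\liminf_{h}\mathscr E_{p_h(x)}(u_h)$ for any sequence converging to $u$'' is false in general --- the correct bound is by $\limsup_h$, not $\liminf_h$ (cf.\ \cite[Proposition~8.1]{dalmaso1993}). This does not affect your proof, since in the final chain of inequalities you only use the $\limsup$ bound anyway.
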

\begin{proof} Suppose that $\mathscr E_{p(x)}(u)<\infty$ (otherwise \eqref{glimsup} is obvious) and take $b\in\mathbb R$ such that $b>\mathscr E_{p(x)}(u)$. Let $\delta>0$ and $w\in C^1_{\mathrm{c}}(\Omega)$ with $\|w-u\|_1<\delta$ and $\|\nabla w\|_{p(x)}<b$, then $\|\nabla w\|_{p_h(x)}\to\|\nabla w\|_{p(x)}$ by Lemma \ref{conv}. Therefore, 
$$
b>\lim_{h\to\infty}\mathscr E_{p_h(x)}(w),
$$
and in turn
$$b> \limsup_{h\to\infty}(\inf\{\mathscr E_{p_h(x)}(v)\,:\, \|v-u\|_1<\delta\}).$$
By the arbitrariness of $\delta>0$ we get 
$$
b\ge\Big(\Gamma-\limsup_{h\to\infty}\mathscr E_{p_h(x)}\Big)(u)
$$
and since $b>\mathscr E_{p(x)}(u)$ is arbitrary, we obtain \eqref{glimsup}.
\end{proof}

\begin{lemma}\label{embedding}
Let $p,q:\Omega\to [1,\infty)$ be measurable functions with $p(x)\le q(x)$ for a.a. $x\in\Omega$. Then $L^{q(x)}(\Omega)\hookrightarrow L^{p(x)}(\Omega)$ with embedding constant less or equal than
\begin{equation}\label{emconst}
C(|\Omega|,p,q):=\left[\left(\frac pq\right)_++\left(1-\frac pq\right)_+\right]\max\{|\Omega|^{(1/p-1/q)_+},|\Omega|^{(1/p-1/q)_-}\}.
\end{equation}
In particular, $C(|\Omega|,p,q_j)\to 1$ whenever $q_j$ converges uniformly to $p$.
\end{lemma}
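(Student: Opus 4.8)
The plan is to derive the embedding from a pointwise Young (weighted arithmetic--geometric mean) inequality applied at the level of the $p(x)$-modular; this is, in effect, the generalized H\"older inequality for the factorization $u=u\cdot 1$. By positive homogeneity of the Luxemburg norm and the unit ball property (Proposition \ref{ubp}), it suffices to produce, for each nonzero $u\in L^{q(x)}(\Omega)$, a number $\nu>0$ with $\varrho_{p(x)}(u/\nu)\le 1$ and $\nu\le C(|\Omega|,p,q)\,\|u\|_{q(x)}$; then $\|u\|_{p(x)}\le\nu$. I would first treat the case $p(x)<q(x)$ for a.e.\ $x$ and put $r(x):=p(x)q(x)/(q(x)-p(x))$, so that $1/p(x)=1/q(x)+1/r(x)$ and $p(x)/r(x)=1-p(x)/q(x)$.

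Next, with $\lambda:=\|u\|_{q(x)}$ and $\mu:=\|1\|_{r(x)}$, I would use the identity
$$
\Big|\frac{u(x)}{\lambda\mu}\Big|^{p(x)}=\Big(\big|u(x)/\lambda\big|^{q(x)}\Big)^{p(x)/q(x)}\big(\mu^{-r(x)}\big)^{1-p(x)/q(x)}
$$
together with $A^{\theta}B^{1-\theta}\le\theta A+(1-\theta)B$ to get the pointwise bound $\big|u/(\lambda\mu)\big|^{p(x)}\le\frac{p(x)}{q(x)}\big|u/\lambda\big|^{q(x)}+\big(1-\frac{p(x)}{q(x)}\big)\mu^{-r(x)}$. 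Integrating over $\Omega$, bounding the two weights by $(p/q)_+$ and $(1-p/q)_+$, and using that $\varrho_{q(x)}(u/\lambda)\le1$ and $\varrho_{r(x)}(1/\mu)\le1$ (by the choice of $\lambda,\mu$ and Proposition \ref{ubp}), one gets $\varrho_{p(x)}(u/(\lambda\mu))\le K$ with $K:=(p/q)_++(1-p/q)_+$. Since $(1-p/q)_+=1-(p/q)_-\ge1-(p/q)_+$ one has $K\ge1$; hence, using $p(x)/p_-\ge1$, dividing once more by $K^{1/p_-}$ gives $\varrho_{p(x)}\big(u/(K^{1/p_-}\lambda\mu)\big)\le K^{-1}\varrho_{p(x)}(u/(\lambda\mu))\le1$, and so $\|u\|_{p(x)}\le K^{1/p_-}\lambda\mu\le K\,\|u\|_{q(x)}\,\mu$ by Proposition \ref{ubp} again.

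It then remains to show $\mu=\|1\|_{r(x)}\le M:=\max\{|\Omega|^{(1/p-1/q)_+},|\Omega|^{(1/p-1/q)_-}\}$, which yields the embedding constant $K\mu\le KM=C(|\Omega|,p,q)$. Since $1/r(x)=1/p(x)-1/q(x)$ lies in $[(1/p-1/q)_-,(1/p-1/q)_+]$, I would estimate the modular of the constant function directly: testing it against $\gamma=|\Omega|^{(1/p-1/q)_+}$ when $|\Omega|\ge1$ and against $\gamma=|\Omega|^{(1/p-1/q)_-}$ when $|\Omega|<1$, in each case $\int_\Omega\gamma^{-r(x)}\,dx$ reduces, by monotonicity of $r\mapsto\gamma^{-r}$, to $|\Omega|\cdot|\Omega|^{-1}=1$, giving $\mu\le M$. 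The general case $p\le q$ with $|\{p=q\}|>0$ is handled in the same way with $\mu:=\max\{1,\|1\|_{L^{r(x)}(\{p<q\})}\}$ (the integrand on $\{p=q\}$ being $\big|u/\lambda\big|^{q(x)}\mu^{-q(x)}\le\big|u/\lambda\big|^{q(x)}$), noting that there $(1/p-1/q)_-=0$, hence $M\ge1$ and still $\mu\le M$. I expect this tracking of the constant — with the case distinctions $\{p=q\}$ negligible or not, and $|\Omega|<1$ or $|\Omega|\ge1$ — to be the only genuinely delicate part of the argument.

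Finally, the ``in particular'' is immediate: if $q_j\to p$ uniformly on $\Omega$, then $p/q_j\to1$ and $1/p-1/q_j\to0$ uniformly, so $(p/q_j)_+\to1$, $(1-p/q_j)_+\to0$, and, since $0<|\Omega|<\infty$, both $|\Omega|^{(1/p-1/q_j)_+}$ and $|\Omega|^{(1/p-1/q_j)_-}$ tend to $1$; therefore $C(|\Omega|,p,q_j)\to1$.
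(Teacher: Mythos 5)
Your proof is correct and follows essentially the same route as the paper: both arguments reduce to the generalized H\"older inequality for the splitting $u=u\cdot 1$ with the auxiliary exponent $1/r(x)=1/p(x)-1/q(x)$, yielding the constant $\bigl[(p/q)_++(1-p/q)_+\bigr]\|1\|_{r(x)}$, followed by the bound $\|1\|_{r(x)}\le\max\{|\Omega|^{1/r_-},|\Omega|^{1/r_+}\}$. The only difference is that the paper simply cites these two facts from Diening--Harjulehto--H\"ast\"o--R\r{u}\v{z}i\v{c}ka (inequality (3.2.23) and Lemma 3.2.5), whereas you reprove them at the modular level via Young's inequality and a direct test of the Luxemburg infimum, with correct handling of the degenerate set $\{p=q\}$ and of the cases $|\Omega|\gtrless 1$.
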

\begin{proof} For all $u\in L^{q(x)}(\Omega)$, by H\"older's inequality  
\cite[cf.\ (3.2.23)]{Base}
$$\|u\|_{p(x)}\le\left[\left(\frac pr\right)_++\left(\frac pq\right)_+\right]\|1\|_{r(x)}\|u\|_{q(x)},$$
where $1/r:=1/p-1/q$ a.e. in $\Omega$. Moreover, by  
\cite[Lemma 3.2.5]{Base}, we get
$$
\|1\|_{r(x)}\le\max\{|\Omega|^{1/r_-},|\Omega|^{1/r_+}\},
$$
which concludes the proof.
\end{proof}

\begin{theorem}\label{ginf}
Let $p,\,(p_h)\subset \mathscr C$ be such that $p(x)\le p_h(x)$ for all $h\in\mathbb N$ and $x\in\Omega$, and $p_h\to p$ uniformly in $\Omega$. Then 
\begin{equation}\label{gliminf}
\mathscr E_{p(x)}(u)\le\Big(\Gamma-\liminf_{h\to\infty}\mathscr E_{p_h(x)}\Big)(u)\,\,
\quad\mbox{for all $u\in L^1(\Omega)$}.
\end{equation}
\end{theorem}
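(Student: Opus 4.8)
The plan is to use the sequential description of the $\Gamma$-lower limit, valid since $L^1(\Omega)$ is metrizable: it suffices to show that
\[
\mathscr E_{p(x)}(u)\le \liminf_{h\to\infty}\mathscr E_{p_h(x)}(u_h)
\]
for every sequence $(u_h)\subset L^1(\Omega)$ with $u_h\to u$ in $L^1(\Omega)$. If the right-hand side is $+\infty$ there is nothing to prove, so assume $\ell:=\liminf_h\mathscr E_{p_h(x)}(u_h)<\infty$ and pass to a (non relabeled) subsequence along which $\mathscr E_{p_h(x)}(u_h)\to\ell$. In particular, for $h$ large, $u_h\in W^{1,p_h(x)}_0(\Omega)$ and $\|\nabla u_h\|_{p_h(x)}\le\ell+1$.

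The first step is to transfer this bound to the \emph{fixed} space $W^{1,p(x)}_0(\Omega)$. Since $p(x)\le p_h(x)$, Lemma~\ref{embedding} applied to $u_h$ and to $|\nabla u_h|$ gives $\|u_h\|_{1,p(x)}\le C(|\Omega|,p,p_h)\,\|u_h\|_{1,p_h(x)}$; as $W^{1,p_h(x)}_0(\Omega)$ is by definition the $\|\cdot\|_{1,p_h(x)}$-closure of $C^\infty_0(\Omega)$, this comparison shows $W^{1,p_h(x)}_0(\Omega)\hookrightarrow W^{1,p(x)}_0(\Omega)$, hence $u_h\in W^{1,p(x)}_0(\Omega)$ and
\[
\|\nabla u_h\|_{p(x)}\le C(|\Omega|,p,p_h)\,\|\nabla u_h\|_{p_h(x)}\le C(|\Omega|,p,p_h)\,(\ell+1).
\]
By the last assertion of Lemma~\ref{embedding}, $C(|\Omega|,p,p_h)\to1$, where the \emph{uniform} convergence $p_h\to p$ is used; therefore $(u_h)$ is bounded in $W^{1,p(x)}_0(\Omega)$.

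Next I would use the reflexivity of $W^{1,p(x)}_0(\Omega)$ (which holds because $1<p_-\le p_+<\infty$) to extract a further subsequence with $u_h\rightharpoonup\bar u$ weakly in $W^{1,p(x)}_0(\Omega)$; by the compact embedding $W^{1,p(x)}_0(\Omega)\hookrightarrow\hookrightarrow L^{p(x)}(\Omega)$ (cf.\ \cite{diening}) one gets $u_h\to\bar u$ in $L^{p(x)}(\Omega)$, hence in $L^1(\Omega)$, so that $\bar u=u$ and in particular $u\in W^{1,p(x)}_0(\Omega)$. Since $\nabla u_h\rightharpoonup\nabla u$ weakly in $(L^{p(x)}(\Omega))^N$ and the Luxemburg norm is weakly lower semicontinuous, the displayed estimate together with $C(|\Omega|,p,p_h)\to1$ and $\|\nabla u_h\|_{p_h(x)}\to\ell$ yields
\[
\mathscr E_{p(x)}(u)=\|\nabla u\|_{p(x)}\le\liminf_{h\to\infty}\|\nabla u_h\|_{p(x)}\le\liminf_{h\to\infty}C(|\Omega|,p,p_h)\,\|\nabla u_h\|_{p_h(x)}=\ell,
\]
which is exactly \eqref{gliminf}.

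The step I expect to be the main obstacle is precisely the chain $\|\nabla u_h\|_{p(x)}\le C(|\Omega|,p,p_h)\|\nabla u_h\|_{p_h(x)}$ combined with $C(|\Omega|,p,p_h)\to1$: this is the only point where both hypotheses of the theorem genuinely enter, namely the sign condition $p\le p_h$ (so that the embedding between variable exponent spaces runs in the direction $L^{p_h(x)}\hookrightarrow L^{p(x)}$) and the uniformity of the convergence $p_h\to p$ (so that the embedding constant tends to $1$ rather than staying bounded away from it or blowing up). Without either of them one loses the uniform bound for $(u_h)$ in the fixed space $W^{1,p(x)}_0(\Omega)$, and the weak-compactness and lower-semicontinuity argument breaks down.
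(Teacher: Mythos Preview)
Your proof is correct and follows essentially the same route as the paper's: bound the sequence in the fixed space $W^{1,p(x)}_0(\Omega)$ via the embedding constant $C(|\Omega|,p,p_h)$, extract a weak limit by reflexivity, identify it with $u$, and conclude by weak lower semicontinuity together with $C(|\Omega|,p,p_h)\to 1$. The only cosmetic difference is that the paper, after passing to the subsequence with $\mathscr E_{p_{h_n}(x)}(u_{h_n})<b$, replaces each $u_{h_n}$ by a smooth $v_n\in C^1_{\mathrm c}(\Omega)$ with $\|v_n-u_{h_n}\|_1<1/n$ and $\mathscr E_{p_{h_n}(x)}(v_n)<b$, so that membership of $v_n$ in $W^{1,p(x)}_0(\Omega)$ is automatic; you instead argue this membership for $u_h$ directly from the density definition of $W^{1,p_h(x)}_0(\Omega)$, which is equally valid.
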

\begin{proof}
If $\left(\Gamma-\liminf_{h\to\infty}\mathscr E_{p_h(x)}\right)(u)=+\infty$ there is nothing to prove. In the other case, take $b\in\mathbb R$ such that $b>\left(\Gamma-\liminf_{h\to\infty}\mathscr E_{p_h(x)}\right)(u)$. By virtue of \cite[Proposition 8.1-(b)]{dalmaso1993} there exists a sequence $(u_h)\subset L^1(\Omega)$ such that $u_h\to u$ in $L^1(\Omega)$ and 
$$
\Big(\Gamma-\liminf_{h\to\infty}\mathscr E_{p_h(x)}\Big)(u)=\liminf_{h\to\infty}\mathscr E_{p_h(x)}(u_h).
$$
Hence, there is a subsequence $(p_{h_n})$ for which $$\sup_{n\in\mathbb N} \mathscr E_{p_{h_n}(x)}(u_{h_n})<b.$$
Let $(v_n)\subset C^1_{\mathrm{c}}(\Omega)$ verify 
$$\|v_n-u_{h_n}\|_1 <\frac1n,\quad\mathscr E_{p_{h_n}(x)}(v_n)<b\quad\mbox{for all }n\in\mathbb N.$$
Then $v_n\to u$ in $L^1(\Omega)$ and, by the embedding $W^{1,p_{h_n}(x)}_0(\Omega)\hookrightarrow W^{1,p(x)}_0(\Omega)$,
$$b>\|\nabla v_n\|_{p_{h_n}(x)}\ge \frac{\|\nabla v_n\|_{p(x)}}{C(|\Omega|,p,p_{h_n})}\quad\mbox{for all }n\in\mathbb N,$$
where $C(|\Omega|,p,p_{h_n})$ is given in \eqref{emconst} with $q=p_{h_n}$ and $C(|\Omega|,p,p_{h_n})\le 2(1+|\Omega|)<\infty$ for all $n$. Therefore, $(v_n)$ is bounded in the reflexive space $W^{1,p(x)}_0(\Omega)$ and so there exists a subsequence $(v_{n_j})$ such that $v_{n_j}\rightharpoonup u$ in $W^{1,p(x)}_0(\Omega)$. By Lemma \ref{embedding} and the uniform convergence of $p_{h_{n_j}}$ to $p$, 
$$
\lim_{j\to\infty}C(|\Omega|,p,p_{h_{n_j}})=1,
$$ 
and, together with the weak lower semicontinuity of the norm, we get
$$b\ge\liminf_{j\to\infty}\frac{\|\nabla v_{n_j}\|_{p(x)}}{C(|\Omega|,p,p_{h_{n_j}})}\ge\|\nabla u\|_{p(x)}=\mathscr E_{p(x)}(u).$$
In conclusion, by the arbitrariness of $b$, we obtain \eqref{gliminf}.
\end{proof}

\begin{lemma}\label{jlsc} Let $p,\,(p_h)\subset \mathscr C$ be such that $p_h\to p$ pointwise, $u\in L^{p(x)}(\Omega)$, $u_h\in L^{p_h(x)}(\Omega)$ for all $h$, and $u_h\to u$ a.e. in $\Omega$. Then 
$$\|u\|_{p(x)}\le \liminf_{h\to\infty}\|u_{h}\|_{p_h(x)}.$$ 
\end{lemma}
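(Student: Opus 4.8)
The plan is to reduce everything to the unit ball property (Proposition~\ref{ubp}) together with Fatou's lemma, exploiting that only the lower-semicontinuity direction is needed (so no dominating function is required, unlike in Lemma~\ref{conv}). First I would set $\ell:=\liminf_{h\to\infty}\|u_h\|_{p_h(x)}$; if $\ell=+\infty$ there is nothing to prove, so I may assume $\ell<\infty$ and, passing to a subsequence (not relabeled), that $\|u_h\|_{p_h(x)}\to\ell$. It then suffices to show $\|u\|_{p(x)}\le\ell$.

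Fix an arbitrary $\gamma>\ell$. For all $h$ large one has $\|u_h/\gamma\|_{p_h(x)}=\|u_h\|_{p_h(x)}/\gamma\le 1$ by homogeneity of the Luxemburg norm, hence by Proposition~\ref{ubp} the modular satisfies $\varrho_{p_h(x)}(u_h/\gamma)=\int_\Omega|u_h(x)/\gamma|^{p_h(x)}\,dx\le 1$. Next I would check the a.e.\ convergence $|u_h(x)/\gamma|^{p_h(x)}\to|u(x)/\gamma|^{p(x)}$: at points with $u(x)\neq0$ this follows from $u_h(x)\to u(x)$, $p_h(x)\to p(x)\in(1,N)$ and the joint continuity of $(a,b)\mapsto a^b$ on $(0,\infty)\times\R$, while at points with $u(x)=0$ one uses $|u_h(x)|<1$ eventually together with $p_h(x)\ge p_-$, so that $|u_h(x)|^{p_h(x)}\le|u_h(x)|\to 0$. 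Since the integrands are nonnegative, Fatou's lemma gives
\[
\varrho_{p(x)}(u/\gamma)=\int_\Omega|u(x)/\gamma|^{p(x)}\,dx\le\liminf_{h\to\infty}\int_\Omega|u_h(x)/\gamma|^{p_h(x)}\,dx\le 1,
\]
and applying Proposition~\ref{ubp} once more yields $\|u/\gamma\|_{p(x)}\le 1$, i.e.\ $\|u\|_{p(x)}\le\gamma$. Letting $\gamma\downarrow\ell$ completes the argument.

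The proof is essentially routine; the only slightly delicate points are the passage to the subsequence realizing the $\liminf$ and the control of the modular integrands on the zero set of $u$, where the uniform lower bound $p_-$ on the exponents is what rules out any pathology. The pointwise (rather than uniform) convergence of $p_h$ to $p$ suffices here, in contrast with Theorem~\ref{ginf}, precisely because Fatou's lemma — not dominated convergence or an embedding estimate — is the only tool needed for the inequality in question.
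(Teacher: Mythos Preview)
Your proof is correct and follows essentially the same route as the paper's: pass to a subsequence realizing the $\liminf$, use the unit ball property (Proposition~\ref{ubp}) to translate the norm bound into a modular bound, apply Fatou's lemma, and conclude via the unit ball property again. The paper takes $\alpha>\liminf$ and extracts a subsequence with $\|u_{h_j}\|_{p_{h_j}(x)}<\alpha$ throughout, which is logically the same as your passage to a subsequence attaining the $\liminf$ followed by the choice $\gamma>\ell$; you are simply more explicit than the paper about the a.e.\ convergence of the integrands (which the paper uses tacitly), and your remark that only $p_h(x)\ge 1$ is needed on the zero set of $u$ is what actually makes that step go through---the appeal to ``$p_-$'' there is a slight misnomer since $p_-$ refers to the limit exponent, but the bound $p_h(x)>1$ from $p_h\in\mathscr C$ suffices.
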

\begin{proof} Suppose that $\liminf_{h\to\infty}\|u_{h}\|_{p_h(x)}<\infty$ (otherwise there is nothing to prove) and take any $\alpha\in\mathbb R$ such that $\alpha>\liminf_{h\to\infty}\|u_{h}\|_{p_h(x)}$. Then there exists a subsequence $(p_{h_j})$ for which $\|u_{h_j}\|_{p_{h_j}(x)}< \alpha$ for all $j$. Hence, $\varrho_{p_{h_j}(x)}\left(u_{h_j}/\alpha\right)<1$ and by Fatou's Lemma
$$\int_\Omega\left|\frac{u}{\alpha}\right|^{p(x)}dx \le \liminf_{j\to\infty}\int_\Omega\left|\frac{u_{h_j}}{\alpha}\right|^{p_{h_j}(x)}dx\le1.$$
Thus, by Proposition \ref{ubp}, $\|u/\alpha\|_{p(x)}\le1$, that is $\|u\|_{p(x)}\le \alpha$. The conclusion follows by the arbitrariness of $\alpha$. 
\end{proof}

\begin{lemma}\label{le} Let $p,\,(p_h)\subset \mathscr C$ and $p_h\to p$ pointwise and suppose that for some $\bar h\in\mathbb N$  
\begin{equation}
\label{lbound}p_I:=\inf_{h\ge\bar h}(p_{h})_->1
\end{equation}
and
\begin{equation}
\label{ubound}
p_S:=\sup_{h\ge\bar h}(p_{h})_+<p_I^*,\quad\mbox{where }p_I^*=\frac{Np_I}{N-p_I}.
\end{equation} 
Then, for every sequence $(u_h)$ such that $u_h\in W^{1,p_h(x)}_0(\Omega)$ for all $h$ and $u_h\rightharpoonup u$ in $W^{1,p_I}_0(\Omega)$, there exists a subsequence $(u_{h_n})$ for which 
$$
\lim_{n\to\infty}\varrho_{p_{h_n}(x)}(u_{h_n})=\varrho_{p(x)}(u).
$$   
\end{lemma}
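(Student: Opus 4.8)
The plan is to reduce the statement to a Vitali convergence argument for the integrands $|u_{h}(x)|^{p_h(x)}$. First I would discard the finitely many indices with $h<\bar h$, so that throughout $p_I\le p_h(x)\le p_S$ for every $x\in\Omega$ and every $h$, and, passing to the limit, $p_I\le p(x)\le p_S$ as well. Since $p_h\in\mathscr C$ forces $p_I<N$, the critical exponent $p_I^*=Np_I/(N-p_I)$ is finite, and assumption \eqref{ubound} lets me fix a genuine constant $q$ with $p_S<q<p_I^*$. The Rellich--Kondrachov theorem then gives the compact embedding $W^{1,p_I}_0(\Omega)\hookrightarrow\hookrightarrow L^{q}(\Omega)$ (here $\Omega$ bounded with Lipschitz boundary and $p_I$ a constant in $(1,N)$ are used), so from $u_h\rightharpoonup u$ in $W^{1,p_I}_0(\Omega)$ I get $u_h\to u$ strongly in $L^{q}(\Omega)$; in particular $\sup_h\|u_h\|_{L^q(\Omega)}=:M<\infty$, and after extracting a subsequence $(u_{h_n})$ I may also assume $u_{h_n}\to u$ a.e.\ in $\Omega$.

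With this subsequence in hand, I would prove $\varrho_{p_{h_n}(x)}(u_{h_n})\to\varrho_{p(x)}(u)$ by applying Vitali's convergence theorem on the finite-measure space $\Omega$. The pointwise convergence $|u_{h_n}(x)|^{p_{h_n}(x)}\to|u(x)|^{p(x)}$ a.e.\ is immediate from $u_{h_n}\to u$ a.e., $p_{h_n}(x)\to p(x)$ and continuity of $(t,s)\mapsto t^{s}$, the point $u(x)=0$ being handled separately via $|u_{h_n}(x)|^{p_{h_n}(x)}\le|u_{h_n}(x)|^{p_I}\to0$ once $|u_{h_n}(x)|\le 1$ (using $p_I>1$ and $p(x)>0$). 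For the uniform integrability I would use the elementary bound $t^{s}\le 1+t^{p_S}$, valid for $t\ge 0$ and $1\le s\le p_S$, which gives $|u_{h_n}(x)|^{p_{h_n}(x)}\le 1+|u_{h_n}(x)|^{p_S}$ pointwise; then, for every measurable $A\subseteq\Omega$, H\"older's inequality with exponents $q/p_S>1$ and its conjugate yields $\int_A|u_{h_n}|^{p_S}\,dx\le M^{p_S}|A|^{1-p_S/q}$, so that $\{1+|u_{h_n}|^{p_S}\}_n$ is equi-integrable on the finite-measure set $\Omega$, hence uniformly integrable, and therefore so is the dominated sequence $\{|u_{h_n}|^{p_{h_n}(x)}\}_n$. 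Vitali's theorem then closes the argument.

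The only genuinely delicate point — everything else is bookkeeping with the modular and the unit-ball property — is making the dominating sequence uniformly integrable: this is precisely where the subcriticality gap $p_S<p_I^*$ is spent, through the choice of $q\in(p_S,p_I^*)$ and the compact Sobolev embedding that upgrades weak $W^{1,p_I}_0$-convergence to strong $L^{q}$-convergence with $q$ \emph{strictly} above $p_S$. (Incidentally, the same reasoning in fact delivers convergence of the full sequence $\varrho_{p_h(x)}(u_h)$, the subsequence being extracted only to produce a.e.\ convergence of the $u_h$; but the stated conclusion is all that will be needed in the sequel.)
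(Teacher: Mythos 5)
Your argument is correct and follows essentially the same route as the paper: both exploit $p_S<p_I^*$ to upgrade the weak $W^{1,p_I}_0(\Omega)$-convergence to strong convergence in a Lebesgue space of exponent at least $p_S$, pass to an a.e.\ convergent subsequence, bound the integrand by $1+|u_{h_n}|^{p_S}$, and conclude by a convergence theorem. The only cosmetic difference is that the paper embeds compactly into $L^{p_S}(\Omega)$ and extracts a subsequence dominated by a fixed $L^{p_S}$ function so as to apply dominated convergence, whereas you take $q$ strictly between $p_S$ and $p_I^*$ and invoke Vitali via equi-integrability; both are valid.
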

\begin{proof} By \eqref{lbound} and Lemma \ref{embedding}, $u_h\in W^{1,p_I}_0(\Omega)$ for all $h\ge\bar h$. Since $p_S<p_I^*$, $W^{1,p_I}_0(\Omega)$ is compactly embedded in $L^{p_S}(\Omega)$ and so $u_h\to u$ in $L^{p_S}(\Omega)$. 
Then there exists a subsequence $(u_{h_n})$ and a function $v\in L^{p_S}(\Omega)$ for which $u_{h_n}\to u$ and $|u_{h_n}|\le |v|$  a.e.\ in $\Omega$. Whence, a.e.,
\begin{align*}
\lim_{n\to\infty}|u_{h_n}|^{p_{h_n}(x)}&=|u|^{p(x)},\\
|u_{h_n}|^{p_{h_n}(x)}& \le 1+|v|^{p_S}\in L^1(\Omega) \quad\mbox{for all }n\in\mathbb N.
\end{align*}
In conclusion, by the dominated convergence theorem we obtain
$$
\lim_{n\to\infty}\int_\Omega|u_{h_n}|^{p_{h_n}(x)}dx=\int_\Omega|u|^{p(x)}dx,
$$
namely the assertion.
\end{proof}

\begin{remark}
Condition \eqref{lbound} is valid for instance when $p_h\to p$ uniformly or when $p_h\searrow p$ pointwise, while assumption \eqref{ubound} is a consequence of \eqref{lbound} when $p_I> N/2$, being $p_S\le N$.
\end{remark}

\begin{theorem}\label{ug}
Let $p,\,(p_h)\subset \mathscr C$, $p_h\to p$ pointwise and let \eqref{lbound}-\eqref{ubound} hold for some $\bar h \in\mathbb N$. Then, for every subsequence $(p_{h_n})$ and for every sequence $(u_n)\subset L^1(\Omega)$ verifying
\begin{equation}\label{sup}\sup_{n\in\mathbb N}\mathscr E_{p_{h_n}(x)}(u_n)<\infty,\end{equation}
there exists a subsequence $(u_{n_j})$ such that, as $j\to\infty$,
\begin{equation*}
\begin{gathered}u_{n_j}\to u\quad\mbox{in }L^1(\Omega),\\
g_{p_{h_{n_j}}(x)}(u_{n_j})\to g_{p(x)}(u).
\end{gathered}
\end{equation*} 
\end{theorem}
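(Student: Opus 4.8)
The plan is to turn the uniform bound \eqref{sup} into compactness in one fixed Sobolev space, extract a subsequence along which everything converges in a controlled way, and then prove separately the two inequalities $\|u\|_{p(x)}\le\liminf_j\|u_{n_j}\|_{p_{h_{n_j}}(x)}$ and $\limsup_j\|u_{n_j}\|_{p_{h_{n_j}}(x)}\le\|u\|_{p(x)}$.

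First I would note that \eqref{sup} says $u_n\in W^{1,p_{h_n}(x)}_0(\Omega)$ with $\|\nabla u_n\|_{p_{h_n}(x)}$ bounded. For $n$ with $h_n\ge\bar h$ we have $p_I\le p_{h_n}(x)$ by \eqref{lbound}, so Lemma~\ref{embedding} applied with the \emph{constant} exponent $p_I$ gives $W^{1,p_{h_n}(x)}_0(\Omega)\hookrightarrow W^{1,p_I}_0(\Omega)$ with embedding constant $\le 2(1+|\Omega|)$; hence $(u_n)$ is bounded in the reflexive space $W^{1,p_I}_0(\Omega)$. Extract $u_{n_j}\rightharpoonup u$ there. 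Since $p_I<N$ and $p_S<p_I^*$ by \eqref{ubound}, $W^{1,p_I}_0(\Omega)$ is compactly embedded in $L^{p_S}(\Omega)$, so $u_{n_j}\to u$ in $L^{p_S}(\Omega)$ and therefore in $L^1(\Omega)$, which is the first asserted convergence. Passing to a further subsequence (not relabeled) I would also arrange $u_{n_j}\to u$ a.e.\ in $\Omega$ with $|u_{n_j}|\le v$ a.e.\ for some $v\in L^{p_S}(\Omega)$. Finally, $p(x)=\lim_h p_h(x)\le p_S$ for every $x$, so $u\in L^{p_S}(\Omega)\hookrightarrow L^{p(x)}(\Omega)$; in particular $g_{p(x)}(u)=\|u\|_{p(x)}$ and $g_{p_{h_{n_j}}(x)}(u_{n_j})=\|u_{n_j}\|_{p_{h_{n_j}}(x)}$.

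The second step is a modular convergence statement: exactly as in the proof of Lemma~\ref{le}, for every $\lambda>0$ one has $|u_{n_j}/\lambda|^{p_{h_{n_j}}(x)}\to|u/\lambda|^{p(x)}$ a.e.\ in $\Omega$, dominated by $C_\lambda(1+|v|^{p_S})\in L^1(\Omega)$ with $C_\lambda:=\max\{1,\lambda^{-p_S}\}$, so the dominated convergence theorem yields
\[
\lim_{j\to\infty}\varrho_{p_{h_{n_j}}(x)}\bigl(u_{n_j}/\lambda\bigr)=\varrho_{p(x)}\bigl(u/\lambda\bigr)\qquad\text{for every }\lambda>0.
\]
The $\liminf$ inequality $\|u\|_{p(x)}\le\liminf_j\|u_{n_j}\|_{p_{h_{n_j}}(x)}$ is then handed to us by Lemma~\ref{jlsc} (applied to the sequence indexed by $j$, since $p_{h_{n_j}}\to p$ pointwise and $u_{n_j}\to u$ a.e.). For the $\limsup$ inequality I would argue by scaling. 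If $\|u\|_{p(x)}=0$, taking $\lambda=1$ above gives $\varrho_{p_{h_{n_j}}(x)}(u_{n_j})\to0$, so eventually $\|u_{n_j}\|_{p_{h_{n_j}}(x)}<1$ by Proposition~\ref{ubp} and hence $\|u_{n_j}\|_{p_{h_{n_j}}(x)}^{p_S}\le\varrho_{p_{h_{n_j}}(x)}(u_{n_j})\to0$. If $a:=\|u\|_{p(x)}>0$, fix $\eps\in(0,1)$ and take $\lambda=(1+\eps)a$: since $\|u/a\|_{p(x)}=1$ gives $\varrho_{p(x)}(u/a)=1$ (Proposition~\ref{ubp}) and $(1+\eps)^{-p(x)}\le(1+\eps)^{-1}$, the limiting modular satisfies $\varrho_{p(x)}(u/((1+\eps)a))\le(1+\eps)^{-1}<1$; hence $\varrho_{p_{h_{n_j}}(x)}(u_{n_j}/((1+\eps)a))<1$ for $j$ large, i.e.\ $\|u_{n_j}\|_{p_{h_{n_j}}(x)}<(1+\eps)a$ by Proposition~\ref{ubp}, so $\limsup_j\|u_{n_j}\|_{p_{h_{n_j}}(x)}\le(1+\eps)a$; letting $\eps\to0^+$ finishes it. Combining the two inequalities gives $g_{p_{h_{n_j}}(x)}(u_{n_j})=\|u_{n_j}\|_{p_{h_{n_j}}(x)}\to\|u\|_{p(x)}=g_{p(x)}(u)$.

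The main obstacle is the $\limsup$ estimate. Unlike in the constant-exponent case, the modular convergence $\varrho_{p_{h_{n_j}}(x)}(u_{n_j})\to\varrho_{p(x)}(u)$ with value in $(0,\infty)$ does not by itself determine the Luxemburg norm, because the exponents vary; the scaling device circumvents this by combining the unit-ball property (Proposition~\ref{ubp}) with the elementary fact that dividing a function of unit modular by a factor $>1$ strictly lowers its modular, so that the rescaled modulars are eventually $<1$. Everything else is a routine assembly of reflexivity, the compact Sobolev embedding, Lemma~\ref{embedding}, Lemma~\ref{jlsc}, and the dominated-convergence argument of Lemma~\ref{le}.
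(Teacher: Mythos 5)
Your proposal is correct and follows essentially the same route as the paper: boundedness in $W^{1,p_I}_0(\Omega)$ via Lemma~\ref{embedding}, weak compactness and $L^1$/a.e.\ convergence, Lemma~\ref{jlsc} for the lower bound, and modular convergence by dominated convergence combined with Proposition~\ref{ubp} for the upper bound. The only cosmetic difference is in the $\limsup$ step, where the paper applies Lemma~\ref{le} to $u_{n_j}/\alpha$ for an arbitrary $\alpha$ below the $\limsup$ and deduces $\|u\|_{p(x)}\ge\alpha$, while you rescale by $(1+\eps)\|u\|_{p(x)}$ and push the modulars below $1$; the two are logically dual and use identical ingredients.
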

\begin{proof} For all $h_n\ge\bar h$, $W^{1,p_{h_n}(x)}_0(\Omega)\hookrightarrow W^{1,p_I}_0(\Omega)$ with embedding constant  less than or equal to $2(1+|\Omega|)$ (cf.\ \cite[Corollary 3.3.4]{Base}), then
$$
\|\nabla u_n\|_{p_I}\le 2(1+|\Omega|)\|\nabla u_n\|_{p_{h_n}(x)}\le 2b(1+|\Omega|),
$$
where $$b:=\sup_{n\in\mathbb N}\mathscr E_{p_{h_n}(x)}(u_n).$$ Since $W^{1,p_I}_0(\Omega)$ is reflexive, $(u_n)$ admits a subsequence $(u_{n_j})$ weakly convergent to $u$ in  $W^{1,p_I}_0(\Omega)$. Thus, $u_{n_j}\to u$ in $L^1(\Omega)$ and up to a subsequence $u_{n_j}\to u$ a.e. in $\Omega$.
For the second part of the statement, we have to prove that $\|u_{n_j}\|_{p_{h_{n_j}}(x)}\to\|u\|_{p(x)}$. By Lemma~\ref{jlsc} we know that 
$$
\|u\|_{p(x)}\le\liminf_{j\to\infty}\|u_{n_j}\|_{p_{h_{n_j}}(x)}.
$$
Now, for every real number 
$$
\alpha<\limsup_{j\to\infty}\|u_{n_j}\|_{p_{h_{n_j}}(x)},
$$ 
there exists a subsequence, still denoted by $(p_{h_{n_j}})$, for which $\alpha<\|u_{n_j}\|_{p_{h_{n_j}}(x)}$ for all $j$, and so 
$$1<\int_{\Omega}\left|\frac{u_{n_j}}\alpha\right|^{p_{h_{n_j}}(x)}dx,$$
by virtue of Proposition \ref{ubp}. Therefore, Lemma \ref{le} yields
$$1\le\lim_{j\to\infty}\varrho_{p_{h_{n_j}}(x)}\left(\frac{u_{n_j}}\alpha\right)=\varrho_{p(x)}\left(\frac{u}\alpha\right),$$
that is $\|u\|_{p(x)}\ge\alpha$ again by unit ball property. The conclusion follows by the arbitrariness of $\alpha$. 
\end{proof} 
 
 \noindent
We need to show that the minimax values with respect to the $W^{1,p(x)}_0(\Omega)$-topology are equal to those with respect to the weaker topology $L^1(\Omega)$.
To this aim, let $\mathcal W^{(m)}_{p(x)}$ be the family of those subsets $K$ of 
$$\{u\in W^{1,p(x)}_0(\Omega)\,:\,g_{p(x)}(u)=1 \}$$
which are compact and symmetric (i.e. $K=-K$), for which $i(K)\ge m$ with respect to the norm topology of $W^{1,p(x)}_0(\Omega)$, where $i$ denotes the Krasnosel'ski\u{\i} genus.
Furthermore, denote by $\mathcal K^{(m)}_{s,p(x)}$ the family of compact and symmetric subsets $K$ of 
$$\{u\in L^1(\Omega)\,:\, g_{p(x)}(u)=1\}$$
such that $i(K)\ge m$, with respect to the topology of $L^1(\Omega)$. 

\begin{theorem}\label{topol} 
Let $p\in\mathscr C$ and $\mathscr E_{p(x)}:L^1(\Omega)\to [0,\infty]$ be the function defined in \eqref{Epx}. Then, $\mathscr E_{p(x)}$ is convex, even and positively homogeneous of degree 1. Moreover, for every integer $m\ge1$, we have 
\begin{equation}\label{infsup}\inf_{K\in\mathcal K^{(m)}_{s,p(x)}}\sup_{K}\mathscr E_{p(x)}=\inf_{K\in\mathcal W^{(m)}_{s,p(x)}}\sup_{K}\mathscr E_{p(x)}.\end{equation}
\end{theorem}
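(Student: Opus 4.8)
The plan is to prove the two easy structural properties first and then establish the nontrivial identity \eqref{infsup} by two inequalities. Convexity, evenness and $1$-homogeneity of $\mathscr E_{p(x)}$ follow directly: the Luxemburg norm $\|\nabla\cdot\|_{p(x)}$ is a genuine norm on $W^{1,p(x)}_0(\Omega)$, hence convex, even and positively $1$-homogeneous there, and the value $+\infty$ off $W^{1,p(x)}_0(\Omega)$ preserves all three properties (note $W^{1,p(x)}_0(\Omega)$ is a linear subspace, so the effective domain is convex and symmetric). This is a one-line verification and I would dispatch it quickly.

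For \eqref{infsup}, the inequality ``$\le$'' is the cheap direction. Given $K\in\mathcal W^{(m)}_{s,p(x)}$, i.e.\ $K$ compact and symmetric in the $W^{1,p(x)}_0$-norm topology, sitting inside $\{g_{p(x)}=1\}$, with genus $\ge m$: since the embedding $W^{1,p(x)}_0(\Omega)\hookrightarrow L^1(\Omega)$ is continuous (indeed $L^{p(x)}(\Omega)\hookrightarrow L^1(\Omega)$ by Lemma~\ref{embedding} with $q=p(\cdot)$, $p\equiv1$, composed with $W^{1,p(x)}_0\hookrightarrow L^{p(x)}$), the identity map $\iota:K\to L^1(\Omega)$ is continuous and odd, so $\iota(K)=K$ is compact in $L^1(\Omega)$, still symmetric, still in $\{g_{p(x)}=1\}$, and by property (iii) of the genus $i(\iota(K))\ge i(K)\ge m$; hence $K\in\mathcal K^{(m)}_{s,p(x)}$. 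Since $\mathscr E_{p(x)}$ is the same function on both sides, this shows $\mathcal W^{(m)}_{s,p(x)}\subseteq\mathcal K^{(m)}_{s,p(x)}$ and therefore the left infimum is over a larger family, giving ``$\le$''.

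The reverse inequality ``$\ge$'' is the main obstacle. Fix $K\in\mathcal K^{(m)}_{s,p(x)}$ with $\sup_K\mathscr E_{p(x)}=:b<\infty$ (if the sup is infinite there is nothing to prove). Then $K\subseteq\{u\in L^1(\Omega):\mathscr E_{p(x)}(u)\le b\}$, so every element of $K$ actually lies in $W^{1,p(x)}_0(\Omega)$ with $\|\nabla u\|_{p(x)}\le b$, and on this set, by Proposition~\ref{p51}(b), $g_{p(x)}$ is $L^1$-continuous, so the constraint $g_{p(x)}=1$ is genuinely satisfied in the relevant sense. The issue is that $K$ need only be $L^1$-compact, not $W^{1,p(x)}_0$-compact, so it is not immediately an admissible competitor on the right. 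The standard remedy (cf.\ the arguments in \cite{champion_depascale2007, littig_schuricht2014} for the constant-exponent case) is to replace $K$ by a slightly perturbed set: use the convexity and $1$-homogeneity of $\mathscr E_{p(x)}$ together with a mollification/retraction argument. Concretely, I would pick a small $\eps>0$, cover $K$ by finitely many $L^1$-balls centered at points of $K$, on each patch use that smooth functions are $W^{1,p(x)}_0$-dense together with Lemma~\ref{conv}-type modular continuity to build an odd continuous map $\pi_\eps:K\to W^{1,p(x)}_0(\Omega)$ with $\|\nabla\pi_\eps(u)\|_{p(x)}\le b+\eps$ and $\pi_\eps(u)$ close to $u$ in $L^1$, then renormalize by $g_{p(x)}$ (legitimate since $g_{p(x)}$ is continuous and positive on the image, which has bounded $\mathscr E_{p(x)}$) to land in $\{g_{p(x)}=1\}$; the renormalization changes $\mathscr E_{p(x)}$ by a factor $\to1$ as $\eps\to0$ because $g_{p(x)}(\pi_\eps(u))\to g_{p(x)}(u)=1$ uniformly, again by Proposition~\ref{p51}(b). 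The image $\widehat K_\eps:=\pi_\eps(K)/g_{p(x)}(\pi_\eps(K))$ is then compact and symmetric in $W^{1,p(x)}_0$, and by property (iii) of the genus (applied to the odd continuous $\pi_\eps$ followed by the odd continuous normalization) $i(\widehat K_\eps)\ge i(K)\ge m$, so $\widehat K_\eps\in\mathcal W^{(m)}_{s,p(x)}$ with $\sup_{\widehat K_\eps}\mathscr E_{p(x)}\le (b+\eps)(1+o(1))$. Letting $\eps\to0$ yields $\inf_{\mathcal W^{(m)}_{s,p(x)}}\sup\mathscr E_{p(x)}\le b$, and taking the infimum over $K\in\mathcal K^{(m)}_{s,p(x)}$ gives ``$\ge$''. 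The delicate points to get right are the continuity and oddness of the patched approximation map $\pi_\eps$ (a partition-of-unity construction handles this) and the fact that the genus does not drop under these odd continuous maps — which is exactly property (iii).
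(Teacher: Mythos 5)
Your proposal is correct in substance but takes a genuinely different, more self-contained route than the paper. The paper disposes of \eqref{infsup} in two lines: after checking that $\mathscr E_{p(x)}$ is convex, even and positively $1$-homogeneous and that, by Proposition~\ref{p51}(b), the constraint $g_{p(x)}$ is continuous on each sublevel set $\{\mathscr E_{p(x)}\le b\}$ with respect to the $L^1$-topology (hence a fortiori with respect to the $W^{1,p(x)}_0$-topology), it invokes the abstract minimax-comparison result \cite[Corollary~3.3]{DM14}, which asserts precisely that under these structural hypotheses the inf-sup values computed over the two families of admissible sets coincide. What you have written is, in effect, a proof of that cited corollary in the present setting: your easy direction (the inclusion $\mathcal W^{(m)}_{s,p(x)}\subseteq\mathcal K^{(m)}_{s,p(x)}$ via property (iii) of the index applied to the continuous odd embedding into $L^1$) and your hard direction (an odd, $L^1$-to-$W^{1,p(x)}_0$-continuous perturbation $\pi_\eps$ built from a finite symmetric cover and a partition of unity, using convexity to keep $\mathscr E_{p(x)}\le b+\eps$ and Proposition~\ref{p51}(b) plus compactness of $K$ to make the $g_{p(x)}$-renormalization a uniformly small correction) are exactly the mechanisms behind the Degiovanni--Marzocchi argument. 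The trade-off is clear: the paper's approach buys brevity and delegates the delicate points (odd subordinate partition of unity, uniformity of $g_{p(x)}(\pi_\eps(u))\to 1$ over $K$, preservation of the index under the composed odd map) to a reference, while your approach makes the theorem self-contained at the cost of having to write out those details carefully --- in particular the symmetrization of the cover so that $\pi_\eps(-u)=-\pi_\eps(u)$, and the compactness argument upgrading the pointwise continuity of $g_{p(x)}$ on sublevel sets to the uniform statement you need before renormalizing. None of these is a gap in the sense of a wrong step; they are standard but nontrivial bookkeeping that your sketch correctly identifies as the delicate points.
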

\begin{proof} The fact that $\mathscr E_{p(x)}$ is convex, even and positively homogeneous of degree 1 follows easily by the definition. Furthermore, by Proposition \ref{p51}-(b) we know that for all $b\in\mathbb R$ the restriction of $g_{p(x)}$ to the set $\{v\in L^1(\Omega)\,:\, \mathscr E_{p(x)}(v)\le b\}$ is $L^1(\Omega)$-continuous. {\it A fortiori} the restriction of $g_{p(x)}$ to the same set is continuous with respect to the stronger topology $W^{1,p(x)}_0(\Omega)$ and the conclusion follows by \cite[Corollary 3.3]{DM14}.
\end{proof}

%Let $\mathcal K$ be the family of nonempty compact subsets of $L^1_{\mathrm{loc}}(\Omega)$. If $d$ is a compatible distance on $L^1_{\mathrm{loc}}(\Omega)$, the associated \emph{Hausdorff distance} $d_{\mathcal{H}}$ is defined on $\mathcal{K}$ as
%$$
%d_{\mathcal{H}}(K_1,K_2) = \max\Big\{
%\max_{u\in K_1}\,d(u,K_2)\,,\,
%\max_{v\in K_2}\,d(v,K_1)\Big\}\,.
%$$
%The \emph{$\mathcal{H}$-topology} is the topology on $\mathcal{K}$ induced by $d_{\mathcal{H}}$.
%
%For any variable exponent $p\in\mathscr C$ and for any integer $m\ge1$, we define $\mathcal E_{p(x)}^{(m)}:\mathcal K\to[0,+\infty]$ 
%as
%\begin{equation}\label{Empx}
%\mathcal E_{p(x)}^{(m)}(u)=\begin{cases}\sup_{K}\mathscr E_{p(x)}\quad&\mbox{if }K\in \mathcal K^{(m)}_{s,p(x)},\\
%+\infty\quad&\mbox{otherwise}.\end{cases}
%\end{equation}  
%
%We now recall the notion of asymptotic equicoercitivity that will be useful in what follows.
%\begin{definition} \rm
%A sequence $(f_h)$ of functions from a metrizable topological space $X$ to $\overline{\mathbb{R}}$
%is said to be \emph{asymptotically equicoercive} if,
%for every strictly increasing sequence $(h_n)$ in $\mathbb{N}$
%and every sequence $(u_n)$ in $X$ satisfying
%$$
%\sup_{n\in\mathbb{N}} f_{h_n}(u_n) < +\infty\,,
%$$
%there exists a subsequence $(u_{n_j})$ converging in $X$.
%\end{definition}

\section{Proof of Theorem~\ref{main}}

\noindent Due to Proposition \ref{p51} and the first part of Theorem \ref{topol}, the functionals $\mathscr E_{p(x)}$, $g_{p(x)}$, $\mathscr E_{p_h(x)}$ and $g_{p_h(x)}$ for all $h\in \mathbb N$ satisfy all the structural assumptions required in Section 4 of \cite{DM14}.   
Moreover, by Theorems \ref{gsup} and \ref{ginf}, we know that
$$
\mathscr E_{p(x)}(u)=\Big(\Gamma-\lim_{h\to\infty}\mathscr E_{p_h(x)}\Big)(u)\,\,\quad\mbox{for all $u\in L^1(\Omega)$}.
$$
Therefore, together with Theorem \ref{ug}, all the hypotheses of \cite[Corollary 4.4]{DM14} are verified and so
we can infer that
$$
\inf_{K\in\mathcal K^{(m)}_{s,p(x)}}\sup_{u\in K}\mathscr E_{p(x)}(u)=\lim_{h\to\infty}\Big(\inf_{K\in\mathcal K^{(m)}_{s,p_h(x)}}\sup_{u\in K}\mathscr E_{p_h(x)}(u)\Big).
$$ 
Finally, by \eqref{infsup} the last equality reads as
$$
\lambda_{p(x)}^{(m)}=\inf_{K\in\mathcal W^{(m)}_{s,p(x)}}\sup_{u\in K}\mathscr E_{p(x)}(u)=\lim_{h\to\infty}\Big(\inf_{K\in\mathcal W^{(m)}_{s,p_h(x)}}\sup_{u\in K}\mathscr E_{p_h(x)}(u)\Big)=\lim_{h\to\infty} \lambda_{p_h(x)}^{(m)}
$$ 
which proves the assertion. 

\begin{remark}\rm
Significant progresses were recently achieved in the framework of regularity theory for 
minimisers of a class of {\em double phase} integrands of the Calculus of Variations, see 
\cite{BarColMin1,BarColMin2,BarKuuMin,ColMing1,ColMing2} and the references therein. The model case  is
$$
u\mapsto \int_{\Omega} (|\nabla u|^p+a(x)|\nabla u|^q)dx,\quad\,\,\,
q>p,\,\,\,  a(\cdot)\geq 0,
$$
and it can be embedded into the class of Musielak-Orlicz spaces, with Orlicz norm
$$
\|u\|_{L^{\mathcal H}}=\inf\Big\{\lambda>0: \int_{\Omega} {\mathcal H}\Big(x,\frac{|u(x)|}{\lambda}\Big)dx\leq 1 \Big\},\quad
{\mathcal H}(x,s):=s^p+a(x)t^q,\,\,\,\, t\geq 0, \,\,\, x\in\Omega. 
$$
For a given topological index, such as the Krasnosel'ski\u{\i} genus or the $\mathbb{Z}_2$-cohomo\-logical index,
we plan to investigate in a forthcoming paper the asymptotic growth, the stability of the nonlinear eigenvalues $\lambda^{(m)}_{a,p,q}$, 
and the basic properties of the first eigenvalue.
\end{remark}

\medskip

\bigskip
\bigskip

\end{document}